\documentclass[12pt]{amsart}

\usepackage{amsmath, amscd, amssymb, latexsym}
\usepackage{graphicx}
\usepackage{hyperref}
\usepackage{cleveref}
\usepackage{mathptmx}
\usepackage{esint}

\theoremstyle{plain}

\newtheorem{theorem}{Theorem}[section]
\newtheorem{lemma}[theorem]{Lemma}
\newtheorem{proposition}[theorem]{Proposition}
\newtheorem{corollary}[theorem]{Corollary}

\newtheorem{remark}[theorem]{Remark}

\newtheorem{theoremx}{Theorem}

\numberwithin{equation}{section}
\allowdisplaybreaks[3]

\crefname{theorem}{Theorem}{Theorems}
\crefname{lemma}{Lemma}{Lemmas}
\crefname{proposition}{Proposition}{Propositions}
\crefname{corollary}{Corollary}{Corollaries}
\crefname{question}{Question}{Questions}
\crefname{conjecture}{Conjecture}{Conjectures}
\crefname{definition}{Definition}{Definitions}
\crefname{remark}{Remark}{Remarks}
\crefname{section}{Section}{Sections}
\crefname{figure}{Figure}{Figures}
\crefname{theoremx}{Theorem}{Theorems}

\newcommand{\Ric}{\operatorname{Ric}}
\newcommand{\rank}{\operatorname{rank}}
\newcommand{\Rm}{\operatorname{Rm}}

\newcommand{\tr}{\operatorname{trace}}

\begin{document}

\title{Constant Rank Theorems under Intermediate Ricci Curvature}

\author{Mijia Lai}
\address{School of Mathematical Sciences, Shanghai Jiao Tong University, Shanghai 200240, China}
\email{laimijia@sjtu.edu.cn}

\author{Guoqiang Wu}
\address{School of Science, Zhejiang Sci-Tech University, Hangzhou 310018, China}
\email{gqwu@zstu.edu.cn}

\begin{abstract}
We prove two constant rank theorems for the Ricci tensor in two geometric settings: manifolds with harmonic curvature and gradient Ricci solitons. In both cases, if the $k$-intermediate Ricci curvature satisfies $\Ric_k\geq0$ for some $1\leq k\leq n-2$, then the Ricci tensor has constant rank and its nullity distribution is parallel. In the complete case, the universal cover splits isometrically along the Ricci-null directions. We also show that the range $1\leq k\leq n-2$ is sharp: in every dimension $n\geq4$, nonnegative Ricci curvature alone does not imply the constant rank property in either setting.
\end{abstract}

\maketitle

\section{Introduction}

Constant rank theorems, also known as microscopic convexity principles, play an important role in the study of convexity properties of solutions to elliptic partial differential equations. The earliest constant rank results of this type go back to Caffarelli--Friedman \cite{CaffarelliFriedman} and, independently, Singer--Wong--Yau--Yau \cite{SingerWongYauYau}. They were subsequently extended to higher dimensions by Korevaar--Lewis \cite{KorevaarLewis}. Roughly speaking, such results assert that, under suitable structural conditions on the equation, a nonnegative Hessian cannot change rank: once a zero eigenvalue occurs, the degeneracy propagates throughout the connected domain.
A landmark application of this principle is due to Guan--Ma \cite{Guan-Ma}, who established sufficient conditions on the prescribed data for the existence and uniqueness of smooth strictly convex solutions to the intermediate Christoffel--Minkowski problem, equivalently proving the existence of a unique convex hypersurface with prescribed
$k$-th symmetric function of principal radii. Their proof relied on a delicate constant rank theorem for a fully nonlinear Hessian equation, and this work has since become a model for the use of such arguments in geometric analysis.
This principle has since been developed systematically for fully nonlinear elliptic and parabolic equations; see, e.g., \cite{BianGuan,CaffarelliGuanMa, ChenHu2013}.

Besides its applications to the strict convexity of solutions, the constant rank method has important geometric consequences. A natural geometric setting in which the constant rank principle arises is that of Riemannian manifolds with harmonic curvature. A Riemannian manifold $(M^n,g)$ is said to have harmonic curvature if the Riemann curvature tensor is divergence-free, i.e., $\delta\Rm=0$. This class contains, among others, Einstein manifolds, locally conformally flat manifolds with constant scalar curvature, and Riemannian products of Einstein manifolds. Harmonic-curvature metrics have been studied extensively from the viewpoints of rigidity and local and global classification; see, for instance, \cite{Derdzinski1980,Derdzinski1982,DerdzinskiShen}. From this point of view, constant rank phenomena for the Ricci tensor provide a natural source of additional rigidity. In this setting, Caffarelli, Guan and Ma \cite{CaffarelliGuanMa} applied their constant rank method to show that, under nonnegative sectional curvature, the Ricci tensor of a manifold with harmonic curvature has constant rank.

Motivated by the result of Caffarelli--Guan--Ma, we investigate whether their nonnegative sectional curvature assumption can be weakened. To formulate our result, we recall the notion of intermediate Ricci curvature. Given an integer $1\leq k\leq n-1$, one says that $(M^n,g)$ has nonnegative $k$-intermediate Ricci curvature, denoted by $\Ric_k\geq0$, if
\[
\sum_{\alpha=1}^{k}R(v,e_\alpha,v,e_\alpha)\geq0
\]
for every orthonormal $(k+1)$-frame $\{v,e_1,\ldots,e_k\}$. Thus $\Ric_1\geq0$ is precisely nonnegative sectional curvature, while $\Ric_{n-1}\geq0$ is the usual nonnegative Ricci curvature. The conditions $\Ric_k\geq0$ therefore provide a natural interpolation between sectional and Ricci curvature. Our first main result shows that the same constant rank phenomenon continues to hold under the weaker condition \(\Ric_k\geq0\) for any fixed \(1\leq k\leq n-2\), while the endpoint assumption \(\Ric\geq0\) is in general insufficient in dimensions \(n\geq4\). Moreover, the constant rank phenomenon is accompanied by a geometric splitting of the Ricci-null distribution.

\begin{theoremx}\label{thm:mainA}
Let $(M^n,g)$ be a connected Riemannian manifold with harmonic curvature, and suppose that $\Ric_k\geq0$ for some $1\leq k\leq n-2$. Then the Ricci tensor has constant rank on $M$. Moreover, its nullity distribution $\mathcal N=\ker\Ric$ is a parallel subbundle of $TM$.

If, in addition, $(M^n,g)$ is complete, then its universal cover splits isometrically as
\[
(\widetilde M,\widetilde g)
=
\mathbb R^m\times(N^{n-m},g_N),
\]
where $m=\dim\ker\Ric$ and the Ricci tensor of $N$ is positive definite.
\end{theoremx}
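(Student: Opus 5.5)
Harmonic curvature means $\delta\Rm=0$. By the contracted second Bianchi identity this is equivalent to the Ricci tensor being a Codazzi tensor, $(\nabla_X\Ric)(Y,Z)=(\nabla_Y\Ric)(X,Z)$, with constant scalar curvature. The plan is to run a Weitzenb\"ock identity for the Codazzi tensor $\Ric$, combined with a strong maximum principle in the spirit of Caffarelli--Guan--Ma. Then we use $\Ric_k\ge0$ with $k\le n-2$ to control the curvature term on the kernel.

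\textbf{Step 1: a PDE for $\Ric$.} For a Codazzi tensor $h=\Ric$ one has the standard formula
\[
\Delta h_{ij}=\nabla_i\nabla_j(\tr h)+R_{il}h_{lj}-R_{iljm}h_{lm}.
\]
Since $\tr h$ is constant, in an orthonormal frame diagonalizing $\Ric$ with eigenvalues $\lambda_1,\dots,\lambda_n$ this gives
\[
(\Delta \Ric)(e_i,e_i)=\sum_{l}K(e_i,e_l)(\lambda_i-\lambda_l),
\]
where $K$ denotes sectional curvature.

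\textbf{Step 2: nonnegativity and the kernel.} The hypothesis $\Ric_k\ge0$ with $k\le n-2$ implies $\Ric_{n-1}\ge0$, hence $\Ric\ge0$. Suppose $\lambda_1=\dots=\lambda_m=0<\lambda_{m+1}\le\cdots$ at a point where the rank is minimal. For a kernel direction $e_i$ the curvature term is $-\sum_{l>m}K(e_i,e_l)\lambda_l$, which is a weighted Ricci-type sum with unequal weights. This term is the crux. I would rewrite the weights by an Abel/layer-cake decomposition,
\[
\sum_l K(e_i,e_l)\lambda_l=\sum_j(\lambda_j-\lambda_{j-1})\sum_{l\ge j}K(e_i,e_l).
\]
Each inner sum is a partial Ricci curvature over $n-j+1$ directions. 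Also $\sum_{l\ne i}K(e_i,e_l)=\Ric(e_i,e_i)=0$ for $e_i\in\ker\Ric$.

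The key point is this. A partial sum over $p$ directions with $k\le p\le n-1$ is $\ge0$ by $\Ric_k\ge0$, since averaging $k$-sums over subsets gives $\ge0$ for all $p\ge k$. Its complement inside $\Ric(e_i,e_i)=0$ then has sum $\le0$. For complements of size $\ge k$, the $\Ric_k$ condition forces that complementary sum to be $\ge0$ as well, so both vanish. When $k\le n-2$ these constraints force $K(e_i,e_l)=0$ appropriately. In particular, I expect $\sum_l K(e_i,e_l)\lambda_l=0$, or more robustly a two-sided bound on this term by $C\sum_{j\le m'}\lambda_j$ for the small eigenvalues near the point.

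\textbf{Step 3: constant rank via the maximum principle.} Near a point of minimal rank $r=n-m$, I would consider $\phi=\sigma_{r+1}(\Ric)$ or the sum of the $m$ smallest eigenvalues, in the viscosity sense. Using Step 1, the Codazzi property to handle the gradient terms $|\nabla\lambda|^2$ as in Caffarelli--Guan--Ma, and the curvature estimate from Step 2, the goal is a differential inequality $\Delta\phi\le C(\phi+|\nabla\phi|)$. The strong maximum principle then shows that $\{\phi=0\}$ is open. It is closed by continuity, so by connectedness the rank is constant. This step, specifically showing the curvature term has a good sign up to $O(\phi)$ using only $\Ric_k\ge0$ rather than $K\ge0$, is where I expect the main obstacle. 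The sharpness claim for $k=n-1$ shows that the margin $k\le n-2$ must be used exactly here.

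\textbf{Step 4: parallel kernel and splitting.} Once $\phi\equiv0$, the equation forces the gradient terms to vanish, i.e. $(\nabla_X\Ric)(U,V)=0$ for $U,V\in\mathcal N$. Combined with the Codazzi symmetry this gives $\Ric(\nabla_XU,Y)=-(\nabla_X\Ric)(U,Y)=-(\nabla_U\Ric)(X,Y)$. Feeding in the vanishing from the equation for kernel/image mixed components shows $\nabla_XU\in\mathcal N$, so $\mathcal N$ is parallel. In the complete case, de Rham's decomposition splits $\widetilde M=F\times N$ with $TF=\mathcal N$. Since $\Ric|_F=0$ and the kernel is tangent to $F$, the factor $F$ is Ricci-flat. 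The step 2 vanishing of mixed curvatures, together with the harmonic curvature of $F$, should give flatness, hence $F=\mathbb R^m$. Finally, $\Ric_N>0$ because $\ker\Ric=TF$.
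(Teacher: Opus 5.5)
Your overall architecture (the Codazzi equation for $\Ric$, an algebraic consequence of $\Ric_k\ge0$, a maximum principle for the lowest eigenvalues, parallel nullity, de Rham) matches the paper's. The central step is missing, though, and it is the one you flag yourself.

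\textbf{The gap.} Step 2 gives only a \emph{qualitative} statement at exact null directions: if $\Ric(V,V)=0$ then $K(V,X)=0$ for all $X\perp V$. (Cleanly: every sum of $n-2\ge k$ of the $K(V,e_l)$ is $\ge0$, so each single $K(V,e_l)\le0$, and they add up to $\Ric(V,V)=0$.) The strong maximum principle, however, needs a differential inequality on a whole neighborhood of the point $p$ of maximal nullity. There the $m$ smallest eigenvalues are small but positive, and for $i\le m$ you must show $-\sum_lK(e_i,e_l)\lambda_l\le C\lambda_i$. Vanishing at $p$ plus continuity only tells you the left side is $o(1)$. That is useless, because $\lambda_i$ may vanish to higher order at $p$.

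The layer-cake rewriting does not help either. Its inner sums over fewer than $k$ directions carry coefficients $\lambda_j-\lambda_{j-1}$ that are not small, and these sums have no sign unless you already bound them below by a multiple of $\Ric(e_i,e_i)$. So no inequality of the form $\Delta\phi\le C(\phi+|\nabla\phi|)$ is actually derived. Constant rank is therefore not established, and Step 4 presupposes it, since it needs $\mathcal N$ to be a smooth bundle.

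\textbf{The missing ingredient.} You need the pointwise estimate
\[
K(X,Y)\ge-\frac{k-1}{n-1-k}\,\Ric(X,X)\qquad\text{for all orthonormal }X,Y,
\]
valid at every point. This is exactly where $k\le n-2$ enters, and your vanishing statement is its special case $\Ric(X,X)=0$. It follows from your own averaging trick:
\begin{itemize}
\item Put $Y=e_1$ in an orthonormal basis of $X^\perp$.
\item Average $K(X,Y)+\sum_{l\in T}K(X,e_l)\ge0$ over all $(k-1)$-subsets $T\subset\{2,\dots,n-1\}$.
\item This gives $K(X,Y)+\frac{k-1}{n-2}\bigl(\Ric(X,X)-K(X,Y)\bigr)\ge0$, which rearranges to the estimate.
\end{itemize}
Since all $\lambda_l\ge0$, it yields $-\sum_lK_{il}\lambda_l\le \frac{k-1}{n-1-k}R\lambda_i$. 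Your Step 1 formula then gives $(\Delta\Ric)(e_i,e_i)\le\frac{n-2}{n-1-k}R\lambda_i$ at every point.

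\textbf{Finishing the constant rank step.} Use the subtrace $\sigma_m$ of the $m$ smallest eigenvalues rather than an elementary symmetric function. By its concavity, any $C^2$ lower support $\xi$ at a point satisfies $\Delta\xi\le\sum_{i\le m}(\Delta\Ric)(e_i,e_i)$. Hence $\Delta\sigma_m\le CR\,\sigma_m$ in the viscosity sense, with no gradient terms to control, and the strong minimum principle concludes.

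\textbf{Step 4.} It is fine in spirit but should be stated correctly.
\begin{itemize}
\item The identity $(\nabla_X\Ric)(U,V)=0$ for $U,V\in\mathcal N$ is automatic once the rank is constant, and it is not what you need.
\item What you need is $\sum_a\Ric(\nabla_{e_a}U,\nabla_{e_a}U)=\tfrac12(\Delta\Ric)(U,U)=0$, using $\Ric(U,\cdot)=0$ and $R(U,\cdot,U,\cdot)=0$. Together with $\Ric\ge0$ this gives $\nabla_XU\in\mathcal N$.
\item Flatness of the Euclidean factor follows directly from $R(V,\cdot,V,\cdot)=0$; harmonic curvature of $F$ is not needed.
\end{itemize}
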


The range $1\leq k\leq n-2$ in Theorem~\ref{thm:mainA} is sharp within the hierarchy of intermediate Ricci curvatures. Indeed, the endpoint condition $\Ric_{n-1}=\Ric\geq0$ is no longer sufficient in dimensions $n\geq4$. This is in contrast with the three-dimensional situation, where Caffarelli--Guan--Ma showed that nonnegative Ricci curvature already implies the constant rank property for the Ricci tensor on a manifold with harmonic curvature. In higher dimensions, however, the additional sectional curvature information contained in $\Ric_k\geq0$, $k\leq n-2$, is essential. We exhibit a local family of four-dimensional metrics with harmonic curvature and nonnegative Ricci curvature for which the rank of the Ricci tensor drops along a hypersurface. Taking products with flat factors then yields examples in every dimension $n\geq4$.

\begin{theorem}\label{thm:counterexample}
For every $n\geq4$, there exists a connected Riemannian manifold $(M^n,g)$ with harmonic curvature and $\Ric\geq0$ such that the Ricci tensor does not have constant rank.
\end{theorem}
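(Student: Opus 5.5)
The plan has two stages: construct a local example in dimension four, then take products with flat factors. For the second stage, if $(M^4,g)$ has harmonic curvature, $\Ric\geq0$ and non-constant Ricci rank, then $M^4\times\mathbb R^{n-4}$ with the product metric does as well, for every $n\geq4$. Indeed, the curvature tensor of a Riemannian product is the direct sum of the curvature tensors of the factors, and the flat factor contributes nothing. Hence $\delta\Rm=0$ persists, the Ricci tensor is $\Ric_g\oplus 0$ (still nonnegative), and its rank equals $\rank\Ric_g$, which still jumps. So everything reduces to $n=4$, and there it suffices to produce a metric on a small connected open set.

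For $n=4$ I would use a cohomogeneity-one ansatz, which turns the problem into an ODE:
\[
g=dt^2+f(t)^2g_\Sigma+h(t)^2\,d\theta^2,
\]
with $(\Sigma,g_\Sigma)$ a surface of constant curvature $\kappa$. The Ricci tensor is then diagonal with three eigenvalue functions $a(t),b(t),c(t)$, for the $\partial_t$, $\Sigma$ and $\theta$ directions. Harmonic curvature is equivalent to $\Ric$ being a Codazzi tensor, and for such a diagonal tensor this reduces to two first-order relations,
\[
b'=\frac{f'}{f}(a-b),\qquad c'=\frac{h'}{h}(a-c),
\]
which in particular force constant scalar curvature $a+2b+c$. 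Combined with the formulas for $a,b,c$ in terms of $f,h$ and their derivatives, this gives a closed autonomous ODE system in $(f,f',h,h')$. The goal is a solution on an interval where $a,b,c\geq0$ and one eigenvalue, say $c$, has an isolated zero of even order at some $t_0$, while the others stay positive.

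I would first check why the simpler ansatz fails, since that tells us what to avoid. For $g=dt^2+f^2g_{N^3}$ with $N$ of constant curvature, writing $u=f''/f$, one finds $a=-3u$ and $b=u+R_0/3$. The Codazzi relation becomes $u'=-(f'/f)(4u+R_0/3)$, so $u$ can touch $0$ only where $f'=0$. But $f'=0$, $f''=0$ is an equilibrium, which forces the trivial product. So the single-warping family has constant rank, and the extra freedom $h\neq f$ is exactly what should allow an eigenvalue to touch zero tangentially. Equivalently, one could start from Derdzinski's local description of four-dimensional harmonic-curvature metrics with non-parallel Ricci tensor and search inside that family.

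The main obstacle is to find initial data at $t_0$ satisfying all of the following:
\[
c(t_0)=0,\qquad c'(t_0)=0,\qquad c''(t_0)>0,\qquad a(t_0)>0,\qquad b(t_0)>0,
\]
while remaining compatible with the Codazzi constraints. By the second Codazzi relation, $c'(t_0)=0$ together with $c(t_0)=0$ forces either $h'(t_0)=0$ or $a(t_0)=0$. So I would impose $h'(t_0)=0$ and then count parameters: $f(t_0)$, $f'(t_0)$, $h(t_0)$, $\kappa$ and the constant scalar curvature $R_0$. The next step is to compute $c''(t_0)$ from the ODE and show that it can be made positive on an open set of these parameters, with $a(t_0),b(t_0)>0$ there as well. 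By continuity, the solution then has $\Ric\geq0$ on a small interval around $t_0$, and $\rank\Ric$ equals $3$ on the hypersurface $\{t=t_0\}$ and $4$ on either side. If this fails, the fallback is the same parameter count with $\Sigma$ replaced by a flat or hyperbolic surface, or with the roles of $b$ and $c$ exchanged.
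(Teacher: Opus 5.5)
You use the same reduction to $n=4$ by products with $\mathbb R^{n-4}$ and the same doubly warped ansatz as the paper. However, the branch you choose for the degenerate eigenvalue is a dead end, and the computation you postponed ($c''(t_0)>0$) actually gives $0$.

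With $\alpha=f'/f$ and $\beta=h'/h$ one has $c=-h''/h-2\alpha\beta$, i.e.\ $\beta'=-c-\beta^2-2\alpha\beta$, together with the Codazzi relation $c'=\beta(a-c)$. Hence $c(t_0)=0$ and $h'(t_0)=0$ force $\beta'(t_0)=0$, and so $c''(t_0)=\beta'(t_0)\,a(t_0)=0$.

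More decisively, write the Codazzi system as a first-order system in $(f,h,\alpha,\beta,b,c)$. It is third order in $f,h$, so it is not closed in $(f,f',h,h')$:
\[
\alpha'=-b-2\alpha^2-\alpha\beta+\kappa f^{-2},\quad \beta'=-c-\beta^2-2\alpha\beta,\quad b'=\alpha(a-b),\quad c'=\beta(a-c),
\]
with $f'=\alpha f$, $h'=\beta h$ and $a=2b+c+2\alpha^2+4\alpha\beta-2\kappa f^{-2}$. The set $\{\beta=c=0\}$ is invariant, so by uniqueness $h'\equiv0$ and $c\equiv0$. The metric is then a Riemannian product of a three-dimensional warped product with a flat circle, and since $a,b>0$ near $t_0$, the rank is constantly $3$. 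This holds for every $\kappa$, $R_0$, $f(t_0)$, $f'(t_0)$, $h(t_0)$, because $\kappa$ enters the $(\beta,c)$ subsystem only through $a$, which is multiplied by $\beta$. So neither the parameter search nor replacing $\Sigma$ by a flat or hyperbolic surface can rescue this branch.

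The missing ingredient is a source term, and only the intrinsic curvature of $\Sigma$ supplies it. Let the double $\Sigma$-eigenvalue vanish instead, $b(t_0)=0$. Then $b'(t_0)=\alpha(t_0)a(t_0)=0$ forces $f'(t_0)=0$, and the formula $b=-f''/f-\alpha^2-\alpha\beta+\kappa f^{-2}$ gives $\alpha'(t_0)=\kappa/f(t_0)^2$. Hence
\[
b''(t_0)=\frac{\kappa\,a(t_0)}{f(t_0)^2},
\]
which is positive exactly when $\kappa>0$. For $\kappa=0$ the set $\{\alpha=b=0\}$ is invariant by the same argument, and for $\kappa<0$ the eigenvalue turns negative.

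This is precisely the paper's construction. It takes $\Sigma=\mathbb S^2$ with data $f=h=1$, $f'=h'=0$, $f''=1$, $h''=-3$ at $t_0=0$. This gives eigenvalues $(a,b,b,c)=(1,0,0,3)$ and $b(t)=\tfrac12t^2+o(t^2)$, so the rank is $2$ on $\{t=0\}$ (not $3$ as you anticipated) and $4$ nearby. Your listed fallback of exchanging the roles of $b$ and $c$, specialized to $\kappa>0$, is therefore the actual proof. As written, your primary route cannot produce an example.
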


Theorem~\ref{thm:mainA} also has a direct consequence for the classification problem of four-dimensional manifolds with harmonic curvature. In a recent work, Derdzi\'nski \cite{Derdzinski2025} made a further step toward such a classification. He observed that all presently known compact four-manifolds with harmonic curvature belong locally to one of five classes: Einstein manifolds; conformally flat manifolds of constant scalar curvature; products of a one-dimensional manifold with a conformally flat three-manifold of constant scalar curvature; products of two surfaces of constant Gaussian curvature; and a family of metrics arising from a special construction on products of surfaces. Moreover, he proved that if a harmonic-curvature four-manifold lies outside these five classes, then on a dense open subset its Ricci tensor has four distinct eigenvalues. In suitable local coordinates, the metric and the Ricci tensor are simultaneously diagonal, and the remaining structure is constrained by a system of homogeneous polynomial equations. Thus the eigenvalue structure of the Ricci tensor plays a central role in the four-dimensional classification problem.

Under the additional assumption $\Ric_2\geq0$, Theorem~\ref{thm:mainA} gives a particularly simple description of all non-full-rank cases. Indeed, if $(M^4,g)$ is complete and has harmonic curvature with $\Ric_2\geq0$, then either $\Ric>0$ everywhere, or the universal cover splits off a nontrivial Euclidean factor. More precisely, we have the following consequence.

\begin{corollary}\label{cor:four-dimensional-classification}
Let $(M^4,g)$ be a complete Riemannian four-manifold with harmonic
curvature and $\Ric_2\geq0$. If the Ricci tensor is not positive
definite, then its universal cover is isometric to one of
\[
\mathbb R^4,\qquad
\mathbb R^2\times\mathbb S^2(c),\qquad
\mathbb R\times\mathbb S^3(c),
\]
where $c>0$.
\end{corollary}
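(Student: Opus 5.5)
The plan is to apply Theorem~\ref{thm:mainA} with $n=4$ and $k=2$, which is allowed since $k\le n-2=2$. This shows that $\Ric$ has constant rank and that $\mathcal N=\ker\Ric$ is parallel. Moreover, the universal cover splits isometrically as $\mathbb R^m\times(N^{4-m},g_N)$, where $m=\dim\ker\Ric$ and $\Ric_N>0$. If $\Ric$ is not positive definite anywhere, then by constancy of rank it is degenerate everywhere, so $m\ge1$. Because the splitting is a Riemannian product, the curvature conditions pass to the factor $N$. In particular $N$ has harmonic curvature, since $\delta\Rm$ splits. We then run through the cases $m=4,3,2,1$.

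If $m=4$ the cover is $\mathbb R^4$. The case $m=3$ is impossible, because a one-dimensional $N$ has $\Ric_N=0$, contradicting positivity. If $m=2$, then $N$ is a surface with harmonic curvature. For a surface, $\delta\Rm=0$ is equivalent to $dR=0$ via the contracted Bianchi identity $\delta \Rm \sim d\Ric$-type terms, so $N$ has constant Gaussian curvature $K$. Positivity of $\Ric_N=Kg_N$ forces $K>0$. $N$ is complete and simply connected, since it is a factor of the simply connected complete cover, so $N=\mathbb S^2(c)$.

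If $m=1$, then $N^3$ is three-dimensional with harmonic curvature, which means $\Ric_N$ is a Codazzi tensor and the scalar curvature is constant. The key step, and the main obstacle, is to use $\Ric_2\ge0$ on $\mathbb R\times N$ to force $N$ to be Einstein, hence of constant curvature. Take a unit vector $v$ tangent to $N$ and frames $\{v,e_1,e_2\}$ with $e_1$ equal to the flat direction $\partial_t$ and $e_2\in TN$. Since $R(v,\partial_t,v,\partial_t)=0$, the condition $\Ric_2\ge0$ gives $\sec_N(v,e_2)\ge0$. So $N$ has nonnegative sectional curvature together with harmonic curvature and $\Ric_N>0$.

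To conclude that $N$ is Einstein, I would appeal to the known rigidity for three-manifolds with harmonic curvature and $\sec\ge0$. For example, a Bochner/Weitzenböck formula for the Codazzi tensor $\Ric_N$ with constant trace gives $\tfrac12\Delta|\mathring{\Ric}|^2=|\nabla\Ric|^2+(\text{curvature term})\ge0$, where the curvature term is nonnegative under $\sec\ge0$; in dimension three it is of the form $\sum_{i<j}\sec_{ij}(\lambda_i-\lambda_j)^2$. Since $N$ is complete with $\Ric_N>0$ and $\sec\ge0$, compactness is not automatic. So I would rather use the local structure. Where $\Ric_N$ has distinct eigenvalues, the Codazzi property combined with constant scalar curvature and $\sec\ge0$ forces the product structures of Derdzinski's local classification. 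Such products have a flat $\mathbb R$ factor, which contradicts $\Ric_N>0$. Alternatively, the Bochner term's vanishing gives $\nabla\Ric=0$, so $N$ is locally symmetric and irreducible by positivity, hence Einstein.

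Once $N$ is Einstein in dimension three, it has constant curvature $c>0$. Being complete and simply connected, it is $\mathbb S^3(c)$. If the global Bochner argument fails, I would fall back on the de Rham decomposition of $N$ using the parallel eigenspaces of $\Ric_N$.
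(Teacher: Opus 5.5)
Your reduction via Theorem~\ref{thm:mainA} and your treatment of $m=4,3,2$ agree with the paper and are fine. Your observation that $\Ric_2\geq0$ on $\mathbb R\times N^3$ forces $\sec_N\geq0$ is also correct.

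The gap is the case $m=1$: you never establish that $N^3$ is Einstein.
\begin{itemize}
\item The Bochner identity only shows that $|\mathring{\Ric}_N|^2$ is a bounded subharmonic function. On a complete noncompact three-manifold with $\Ric\geq0$ such functions need not be constant; on $\mathbb R^3$, $-(1+|x|^2)^{-1/2}$ is one. So the vanishing of the curvature term is not available.
\item The de Rham fallback presupposes $\nabla\Ric_N=0$, which is exactly the point at issue.
\item The local-structure route is not merely incomplete; it cannot work. Take the rotationally symmetric metrics $dt^2+f(t)^2g_{\mathbb S^2}$ with constant scalar curvature $6$, i.e.\ $f''=\frac{1-(f')^2}{2f}-\frac32 f$. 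These are locally conformally flat with constant scalar curvature, hence have harmonic curvature in dimension three. Near a critical point of $f$ with $f(0)>1/\sqrt3$ and $f(0)\neq1$, they satisfy $\sec>0$ and $\Ric>0$ but are not Einstein.
\end{itemize}
The product of such a piece with $\mathbb R$ satisfies every local hypothesis of the corollary, yet it is not locally isometric to any of the three models. So completeness of $N$ must enter in an essential way, and no local argument can close this case.

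The missing global ingredient is the one the paper uses. In dimension three, a Codazzi Ricci tensor with constant scalar curvature means the Cotton tensor vanishes. Hence $N^3$ is locally conformally flat with constant scalar curvature and $\Ric_N\geq0$. The Hu--Li--Simon classification of complete such three-manifolds then gives universal cover $\mathbb S^3(c)$, $\mathbb R^3$, or $\mathbb R\times\mathbb S^2(c)$. Since $\Ric_N>0$ and $\pi_1(N)=1$, only $\mathbb S^3(c)$ remains. Your extra information $\sec_N\geq0$ is not needed for this step.
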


\begin{proof}
By Theorem~\ref{thm:mainA}, the Ricci tensor has constant rank and
\[
\widetilde M=\mathbb R^{4-r}\times N^r,
\qquad
\Ric_N>0,
\]
where $r=\rank\Ric$.

If $r=0$, then $\Ric\equiv0$, and
Corollary~\ref{cor:null-jacobi} implies that the metric is flat.
Thus $\widetilde M\cong\mathbb R^4$.

The case $r=1$ is impossible, since every one-dimensional manifold has
identically vanishing Ricci curvature.

If $r=2$, then $N^2$ has positive constant Gaussian curvature. Since
$N^2$ is complete and simply connected, it is isometric to a round
sphere $\mathbb S^2(c)$ for some $c>0$.

Finally, suppose $r=3$. Then
\[
\widetilde M=\mathbb R\times N^3.
\]
The factor $N^3$ is complete and simply connected, has positive Ricci
curvature and constant scalar curvature, and its Ricci tensor is
Codazzi. In dimension three this implies that the Cotton tensor
vanishes, so $N^3$ is locally conformally flat. By the classification
theorem of Hu--Li--Simon \cite{HuLiSimon}, a complete locally
conformally flat three-manifold with constant scalar curvature and
nonnegative Ricci curvature has universal cover isometric to one of
\[
\mathbb S^3(c),\qquad
\mathbb R^3,\qquad
\mathbb R\times\mathbb S^2(c).
\]
Since $N^3$ is already simply connected and $\Ric_N>0$, the latter two
possibilities are excluded. Hence
\[
N^3\cong\mathbb S^3(c).
\]
\end{proof}

Consequently, within the class $\Ric_2\geq0$, all harmonic-curvature four-manifolds with nontrivial Ricci nullity already fall into the classical product types appearing in Derdzi\'nski's list. Thus, if a complete harmonic-curvature four-manifold with $\Ric_2\geq0$ lies outside the five known classes, then necessarily $\Ric>0$ everywhere. Combining this observation with Derdzi\'nski's theorem, one sees that on a dense open subset such a hypothetical new example must satisfy
\[
0<\lambda_1<\lambda_2<\lambda_3<\lambda_4,
\]
where $\lambda_1,\ldots,\lambda_4$ are the eigenvalues of the Ricci tensor. In this sense, Theorem~\ref{thm:mainA} reduces the remaining classification problem under $\Ric_2\geq0$ to the positive-Ricci, four-distinct-eigenvalue branch.

The second natural setting for the constant rank problem is provided by gradient Ricci solitons. Recall that a gradient Ricci soliton is a triple $(M^n,g,f)$ satisfying
\[
\Ric+\nabla^2f=\lambda g
\]
for some constant $\lambda\in\mathbb R$. The geometry and rigidity of gradient Ricci solitons under curvature assumptions have been extensively studied. Petersen and Wylie \cite{PetersenWylieClassification,PetersenWylieRigidity} developed a general rigidity theory for gradient Ricci solitons and obtained classification and splitting results under various curvature conditions. In a direction particularly relevant to the present paper, Petersen--Wylie \cite{PetersenWylieClassification} and Guan--Lu--Xu \cite{GuanLuXu} proved that if a gradient Ricci soliton has nonnegative sectional curvature, then its Ricci tensor has constant rank; in the complete case, the universal cover splits isometrically along the nullity of the Ricci tensor. In the broader context of Bochner-type rigidity, Petersen and Wink \cite{PetersenWink2021} extended the Bochner technique to curvature operators, showing that suitable lower bounds on the averaged lowest eigenvalues of the curvature operator yield vanishing theorems for Betti numbers; in a sequel \cite{PetersenWink2022} they established Tachibana-type rigidity theorems in the K\"ahler and quaternionic K\"ahler settings. More recently, Li and Petersen \cite{LiPetersen2026} lowered the required curvature-operator threshold to $\frac{2(n-1)}{3}$
  for Einstein manifolds. While these results concern the curvature operator rather than the Ricci tensor, the underlying philosophy that a suitable positivity condition forces a degenerate algebraic structure to propagate is the same as the one pursued here, and the refined Ricci-algebra estimates of \cite{LiPetersen2026} are used in our companion discussion to relax the corresponding dimension thresholds.
 More recently, Wu--Wu \cite{Wu-Wu} studied rigidity and compactness of shrinking gradient Ricci solitons under curvature-pinching conditions weaker than nonnegative sectional curvature. Constant rank is also closely related to rigidity in the presence of constant scalar curvature: Fern\'andez-L\'opez and Garc\'ia-R\'io \cite{FernandezLopezGarciaRio} showed that, for complete gradient Ricci solitons with constant scalar curvature, the constant-rank property of the Ricci operator characterizes rigidity. For further work on constant-rank methods and the classification of Ricci solitons, see \cite{OuQuWu2025,Wu-Wu2}.

Our second main result extends the constant rank and splitting results of Petersen--Wylie and Guan--Lu--Xu. We show that their conclusion continues to hold under the weaker assumption $\Ric_k\geq0$ for any fixed $1\leq k\leq n-2$.

\begin{theoremx}\label{thm:mainB}
Let $(M^n,g,f)$ be a connected gradient Ricci soliton satisfying
\[
\Ric+\nabla^2 f=\lambda g
\]
for some constant $\lambda\in\mathbb R$. Suppose that
$
\Ric_k\geq0
$
for some $1\leq k\leq n-2$. Then the Ricci tensor has constant rank on $M$. Moreover, its nullity distribution
$
\mathcal N=\ker\Ric
$
is a parallel subbundle of $TM$.

If, in addition, $(M^n,g)$ is complete and
$
m=\dim\ker\Ric,
$
then its universal cover splits isometrically as
\[
(\widetilde M,\widetilde g)
=
\mathbb R^m\times(N^{n-m},g_N),
\]
where $\Ric_{g_N}>0$. Furthermore, the lift of the soliton potential splits as
\begin{equation}\label{eq:potential-splitting}
\widetilde f(x,y)
=
\frac{\lambda}{2}|x|^2+\langle a,x\rangle+f_N(y)+C.
\end{equation}
for some $a\in\mathbb R^m$ and $C\in\mathbb R$, where
\[
\Ric_{g_N}+\nabla_N^2f_N=\lambda g_N.
\]
\end{theoremx}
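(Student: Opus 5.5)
Our plan is to run the Petersen--Wylie / Guan--Lu--Xu argument with a sharper algebraic input. For a gradient soliton, the Ricci tensor satisfies the elliptic system
\[
\Delta_f \Ric = 2\lambda \Ric - 2\,\Rm * \Ric,\qquad (\Rm*\Ric)_{ij}=R_{ikjl}R_{kl},
\]
where $\Delta_f=\Delta-\nabla_{\nabla f}$. We also have $\nabla_i R_{jk}-\nabla_j R_{ik}=R_{ijkl}\nabla_l f$ and $\nabla R = 2\Ric(\nabla f)$.

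\textbf{Nonnegativity of $\Ric$.} Since $\Ric_k\geq 0$ with $k\leq n-2$ implies $\Ric_{n-1}\geq0$, we have $\Ric\geq0$. Thus the rank is lower semicontinuous, and we may work near a point where the rank attains its minimum value $r$.

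\textbf{Constant rank via a strong maximum principle.} We apply the strong maximum principle to $\phi=\sigma_{r+1}(\Ric)$, or to the sum of the lowest $n-r$ eigenvalues. Near a point where this quantity vanishes, we diagonalize and split the indices into a ``good'' block $G$ (eigenvalues bounded below) and a ``bad'' block $B$ (small eigenvalues), as in Caffarelli--Guan--Ma. The goal is a differential inequality of the form
\[
\Delta_f\phi \leq C(\phi+|\nabla\phi|),
\]
from which $\phi\equiv0$ on the connected manifold. The reaction term contributes, to leading order,
\[
-2\sum_{a\in B}\sum_{j\in G}R_{ajaj}\lambda_j .
\]
The key algebraic step is to show this term is $\leq O(\phi)$ using $\Ric_k\geq0$. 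Writing $\lambda_j$ as a superposition of indicator levels (Abel summation on the ordered eigenvalues of the good block), we reduce it to sums $\sum_{j\in S}R_{ajaj}$ over subsets $S\subset G$. For $|S|\geq k$ these sums are nonnegative by averaging $\Ric_k$ over $k$-subsets of $S$. For $|S|<k$ we use the complementary bound
\[
\sum_{j\in S}R_{ajaj}=R_{aa}-\sum_{j\notin S,\,j\neq a}R_{ajaj},
\]
and the complement has at least $n-1-|S|\geq n-k\geq2$ directions. Here $R_{aa}=\lambda_a=O(\phi)$, and $k\leq n-2$ is exactly what makes this work. This is where we expect the main obstacle: handling the Abel-summation cases uniformly while respecting $k\le n-2$, and pairing the gradient terms $|\nabla R_{ab}|^2$ against the good block (via the usual $\sum_{a,j}|\nabla R_{aj}|^2/(\lambda_j)$ terms) so that they have the right sign modulo $C(\phi+|\nabla\phi|)$. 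The first-order terms can be absorbed since $\nabla_{\nabla f}$ is a drift.

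\textbf{Parallel nullity.} Once $\phi\equiv0$, the inequality forces the gradient terms to vanish: $\nabla R_{aj}=0$ for $a\in B$, $j\in G$. This gives $\nabla_X \mathcal N\subset\mathcal N$, so $\mathcal N$ is parallel.

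\textbf{Splitting.} In the complete case, de Rham decomposition gives $\widetilde M=\mathbb R^m\times N$. The $\mathbb R^m$ factor is flat, because $\mathcal N$ is parallel and $\Ric(\mathcal N)=0$ restricts to the flat leaves. On $N$ we have $\Ric_N>0$, since the full Ricci tensor is block diagonal. The soliton equation then gives $\nabla^2\widetilde f=\lambda$ on $\mathbb R^m$ directions and zero mixed Hessian. Integrating yields \eqref{eq:potential-splitting}, with $f_N$ solving the soliton equation on $N$.
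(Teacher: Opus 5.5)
\textbf{The gap: flatness of the Euclidean factor.} You assert that the de Rham factor $M_0$ tangent to $\mathcal N$ is flat because $\mathcal N$ is parallel and $\Ric(\mathcal N)=0$. On a Riemannian product $\Ric=\Ric_{M_0}\oplus\Ric_N$, so this only shows that $M_0$ is Ricci-flat. Ricci-flat does not imply flat once $m\geq4$. For example, a Ricci-flat K3 surface with constant potential is a steady soliton with $\mathcal N=TM$ parallel and $\Ric|_{\mathcal N}=0$, yet it is compact, simply connected and not $\mathbb R^4$. It violates $\Ric_2\geq0$, but your flatness argument never uses that hypothesis. The missing ingredient is Corollary~\ref{cor:null-jacobi}. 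For $V\in\ker\Ric$ one has $K(V,X)\geq-\frac{k-1}{n-1-k}\Ric(V,V)=0$ for every unit $X\perp V$. Then $0=\Ric(V,V)=\sum_\alpha K(V,e_\alpha)$ forces each $K(V,e_\alpha)=0$. Polarizing gives $R(V,\cdot,V,\cdot)\equiv0$, so every sectional curvature of $M_0$ vanishes.

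\textbf{The case $|S|<k$, and the rest of your route.} Your algebraic step is incomplete for $|S|<k$. The identity $\sum_{j\in S}K_{aj}=\lambda_a-\sum_{j\in T}K_{aj}$, with $T$ the remaining directions orthogonal to $e_a$, gives no bound by itself, since you have no upper bound on $\sum_{j\in T}K_{aj}$. What works is to complete $S$ by $k-|S|$ directions from $T$, apply $\Ric_k\geq0$, and average over all completions. This gives $\sum_{j\in S}K_{aj}\geq-\frac{k-|S|}{n-1-k}\lambda_a$, and the positivity of $n-1-k$ is exactly where $k\leq n-2$ enters. The case $|S|=1$ is Lemma~\ref{lem:intermediate-algebra}. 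It already gives $-\sum_jK_{aj}\lambda_j\leq c_{n,k}R\lambda_a$ termwise, so the Abel summation is unnecessary, and at $\lambda_a=0$ it is also what closes the flatness gap above.

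With that repaired, your Caffarelli--Guan--Ma style argument for the smooth elementary symmetric function $\sigma_{r+1}(\Ric)$ is a valid alternative to the paper's route. The paper instead applies the strong maximum principle in the viscosity sense to the Lipschitz subtrace $\sigma_m$, via the Bryan--Ivaki--Scheuer lemma, and never sees gradient terms. In your route the gradient terms are not the obstacle you fear. Because the equation for $\Ric$ is linear, the only quadratic gradient terms come from the Hessian of $\sigma_{r+1}$. Near rank $r$, modulo $C(\phi+|\nabla\phi|)$, they equal $-\sigma_{r-1}(G)\sum_{a,b\in B}|\nabla R_{ab}|^2-2\sum_{a\in B,\,j\in G}\sigma_{r-1}(G|j)|\nabla R_{aj}|^2$, where $\sigma_{r-1}(G|j)=\prod_{i\in G\setminus\{j\}}\lambda_i$. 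So no pairing and no use of $\nabla_iR_{jk}-\nabla_jR_{ik}=R_{ijkl}\nabla_lf$ is needed. Your equality-case derivation of parallelism is correct. It amounts to the paper's Bochner identity $(\Delta_f\Ric)(V,V)=2\sum_a\Ric(\nabla_{e_a}V,\nabla_{e_a}V)$, summed over the null directions $V=e_a$, $a\in B$.
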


\begin{remark}
Related rigidity and compactness results for shrinking gradient Ricci
solitons under curvature conditions weaker than nonnegative sectional
curvature were obtained by Wu--Wu \cite{Wu-Wu}. In particular,
nonnegative second-intermediate Ricci curvature implies the curvature
pinching condition considered there. We emphasize that
Theorem~\ref{thm:mainB} is a local constant-rank statement and applies to shrinking,
steady, and expanding gradient Ricci solitons.
\end{remark}

We also construct local examples in every dimension $n\geq4$ showing that $\Ric\geq0$ is no longer sufficient to imply the constant rank property.

\begin{theorem}\label{thm:soliton-sharpness}
For every $n\geq4$, there exists a connected gradient shrinking Ricci
soliton $(M^n,g,f)$ with nonnegative Ricci curvature,
such that the Ricci tensor does not have constant rank.
\end{theorem}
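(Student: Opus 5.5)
Since only a connected (possibly incomplete) example is required, the plan is to work locally with cohomogeneity-one or warped-product ansätze, first in dimension four, and then take products with flat factors. Products with $\mathbb R^{n-4}$ carrying the potential $\frac{\lambda}{2}|x|^2$ preserve the soliton equation with the same $\lambda$. They preserve $\Ric\geq0$, and the rank of $\Ric$ changes by a constant. So the rank of $\Ric$ is nonconstant on the product if and only if it is nonconstant on the four-dimensional factor. Hence it suffices to build a four-dimensional gradient shrinking soliton with $\Ric\geq0$ whose Ricci rank jumps somewhere.

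For $n=4$, I would first try a product $M^4=N^3\times\mathbb R$, or $N^2\times\Sigma^2$, with $f=f_N+\frac{\lambda}{2}t^2$. This reduces the problem to a three-dimensional local shrinking soliton with $\Ric\geq0$ and nonconstant Ricci rank. In dimension three, however, the analogue of Caffarelli--Guan--Ma suggests that constant rank may hold, so I expect this reduction to fail. I would therefore work genuinely in dimension four with a doubly warped ansatz
\[
g=dr^2+a(r)^2g_{\mathbb S^2}+b(r)^2d\theta^2,\qquad f=f(r),
\]
or with $g=dr^2+a(r)^2g_{\mathbb S^1}+b(r)^2g_{\mathbb S^2}$. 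The soliton equation then becomes a system of ODEs for $(a,b,f')$, which can be solved locally with arbitrary initial data at a regular point $r_0$ by standard ODE existence.

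The main step, and the main obstacle, is choosing initial data at $r_0$ so that one Ricci eigenvalue, say the radial one $R_{rr}=-2a''/a-b''/b$ or a tangential one, vanishes exactly at $r_0$ while all other eigenvalues are strictly positive there. We also need this eigenvalue to be positive on both sides of $r_0$, that is, to have a tangency (a double zero) rather than a sign change. The soliton equations express $a'',b'',f''$ in terms of lower-order data. Consequently the eigenvalue $\mu(r)$ and its first derivative $\mu'(r_0)$ are algebraic expressions in the free parameters $(a,a',b,b',f')(r_0)$. I would impose $\mu(r_0)=0$ and $\mu'(r_0)=0$, then check the sign of $\mu''(r_0)$ by differentiating the system once more. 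There are five free parameters and two constraints, plus open positivity conditions on the other eigenvalues and on $\mu''(r_0)$, so a dimension count makes a solution plausible. I would search for one explicitly, perhaps starting near the cylinder $\mathbb S^2\times\mathbb R^2$ with $b\equiv$ const. In that cylinder the flat directions already give $\Ric$ a zero eigenvalue with multiplicity two. The idea is to perturb it so that the degeneracy becomes isolated at $r_0$ while remaining nonnegative nearby.

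On a small interval around $r_0$ this yields $\Ric\geq0$ with rank $3$ at $r_0$ and rank $4$ nearby, which proves the theorem for $n=4$; the flat products then give every $n\geq4$.
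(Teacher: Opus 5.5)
\textbf{The gap.} Your plan is the paper's: the doubly warped metric $dt^2+F^2g_{\mathbb S^2}+H^2d\theta^2$ with $\phi=\phi(t)$, the resulting determined first-order system for $(F,H,a,b,p)$ with $a=F'/F$, $b=H'/H$, $p=\phi'$, a point where one Ricci eigenvalue has a double zero with positive second derivative, and then products with the Gaussian soliton on $\mathbb R^{n-4}$. However, you stop at the one step that carries the content. You exhibit no initial data. The count ``five parameters, two equations, plus open conditions'' is not an argument. The set $\{\mu=\mu'=0\}$ is a nonlinear subvariety, and a parameter count cannot rule out that the open conditions, in particular $\mu''(r_0)>0$, fail on all of it. That such incompatibilities do occur is exactly what constant rank theorems like Theorem~\ref{thm:mainB} assert. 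So the case $n=4$, and with it the theorem, is not yet proved.

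\textbf{The sign is a real constraint.} Normalize $\lambda=\tfrac12$ and take the degenerate eigenvalue to be the circle one, $\rho_3=\tfrac12-bp$. At a point where $\rho_3=\rho_3'=0$, the system forces $bp=\tfrac12$ and $a=-b\rho_0$. Hence $\rho_1=\tfrac12(1+\rho_0)$ and $b'=b^2(2\rho_0-1)$, and
\[
\rho_3''=2b'\rho_0+4ab\rho_1=2b^2\rho_0(\rho_0-2).
\]
So $\rho_3''>0$ holds only when $\rho_0>2$. In particular, this choice fails for every small perturbation of the cylinder $\mathbb S^2\times\mathbb R^2$ you suggest, since there $\rho_0\approx 0$. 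If the radial eigenvalue is the degenerate one instead, a perturbation of the cylinder can be made to work, but again only after an explicit computation.

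\textbf{How the paper closes it.} The paper uses explicit data: $F(0)=1/\sqrt2$, $H(0)=1$, $a(0)=-3/\sqrt2$, $b(0)=p(0)=1/\sqrt2$. These give Ricci eigenvalues $3,2,2,0$ at $t=0$, with $\rho_3'(0)=0$ and $\rho_3''(0)=3$. From there, local ODE existence, continuity, and the Gaussian products finish the proof exactly as you describe.
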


We briefly explain the main ideas behind Theorem~\ref{thm:mainA} and Theorem~\ref{thm:mainB}. The first point is an elementary algebraic lemma
showing that nonnegative intermediate Ricci curvature controls the negative part of the sectional curvature in terms of the Ricci curvature itself. More precisely, if $2\leq k\leq n-2$ and $\Ric_k\geq0$, then for every orthonormal pair $X,Y$ one has
\[
K(X,Y)\geq-\frac{k-1}{n-1-k}\Ric(X,X).
\]
Let $0\leq\lambda_1\leq\cdots\leq\lambda_n$ be the eigenvalues of the Ricci tensor and set $\sigma_\ell=\lambda_1+\cdots+\lambda_\ell$. Since harmonic curvature is equivalent to the Codazzi condition for the Ricci tensor, one has the elliptic identity
\[
\Delta R_{ij}=R_{il}R_{jl}-R_{ikjl}R_{kl}.
\]
Combining this identity with the preceding curvature estimate and a viscosity subtrace argument by Bryan-Ivaki-Scheuer~\cite{BryanIvakiScheuer}, we obtain a differential inequality of the form
\[
\Delta\sigma_\ell\leq C(n,k)R\,\sigma_\ell
\]
in the viscosity sense. The strong maximum principle then implies that the nullity of the Ricci tensor is constant. At a null direction $V$, the same intermediate Ricci curvature estimate implies $R(V,\cdot,V,\cdot)=0$, and a Bochner-type argument shows that $\ker\Ric$ is parallel. The global splitting in the complete case then follows from the de Rham decomposition theorem.

The proof of Theorem~\ref{thm:mainB} is parallel in spirit to that of the harmonic-curvature case, although the underlying elliptic equation is different. The Ricci tensor of a gradient Ricci soliton is generally not Codazzi, but it satisfies the weighted equation
\[
\Delta_f\Ric
=
2\lambda\Ric-2\,\Rm*\Ric.
\]
The same algebraic estimate for the negative part of the sectional curvature under $\Ric_k\geq0$ therefore yields a viscosity differential inequality for the lower Ricci subtraces. The strong maximum principle for the weighted Laplacian then gives the constant rank property, while the vanishing of the curvature in Ricci-null directions implies that the nullity distribution is parallel. In the complete case, the de Rham decomposition theorem gives the metric splitting, and the soliton equation determines the Gaussian form of the potential on the Euclidean factor.

The rest of the paper is organized as follows. In Section~2 we prove Theorem~\ref{thm:mainA}. We first establish the basic algebraic estimates associated with nonnegative intermediate Ricci curvature, and then combine them with the Codazzi equation and a viscosity constant rank argument. We also prove that the Ricci-null distribution is parallel and derive the splitting of the universal cover in the complete case. At the end of Section~2 we construct the doubly warped four-dimensional example described above and prove Theorem~\ref{thm:counterexample}. In Section~3 we prove Theorem~\ref{thm:mainB} and construct a local counterexample showing the sharpness of its curvature assumption.

\section{Constant rank theorem for manifolds with harmonic curvature}
\label{sec:constant-rank}

In this section we prove Theorem~\ref{thm:mainA}. Throughout the paper, we use the convention that the unit sphere has positive sectional curvature and write $K(X,Y)=R(X,Y,X,Y)$ for orthonormal vectors $X,Y$. We denote by $R=\tr_g\Ric$ the scalar curvature and use the rough Laplacian $\Delta=\tr_g\nabla^2$.

The proof consists of three steps. We first derive an elementary algebraic consequence of the condition $\Ric_k\geq0$, which controls the negative part of the sectional curvature by the Ricci curvature. We then combine this estimate with the Codazzi equation and a viscosity inequality for the lower subtraces of the Ricci tensor to obtain the constant rank property. Finally, we show that the Ricci-null distribution is parallel and apply the de Rham decomposition theorem.

We begin with the main algebraic observation.

\begin{lemma}\label{lem:intermediate-algebra}
Let $(M^n,g)$ satisfy $\Ric_k\geq0$ for some $1\leq k\leq n-2$. Then $\Ric\geq0$. Moreover, for every orthonormal pair $X,Y$,
\begin{equation}\label{eq:sectional-lower-bound}
K(X,Y)\geq-\frac{k-1}{n-1-k}\Ric(X,X),
\end{equation}
where the right-hand side is understood to be $0$ when $k=1$.
\end{lemma}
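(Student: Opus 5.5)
Both claims follow from averaging the defining inequality over subsets of a fixed orthonormal frame; no differential information is needed. Fix a point and a unit vector $X$, and complete it to an orthonormal basis $\{X,e_1,\dots,e_{n-1}\}$ of the tangent space. Write $K_i=K(X,e_i)$, so that $\Ric(X,X)=\sum_{i=1}^{n-1}K_i$. The hypothesis $\Ric_k\geq0$ says that $\sum_{i\in S}K_i\geq0$ for every $k$-element subset $S\subset\{1,\dots,n-1\}$.

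\textbf{Step 1: $\Ric\geq0$.} Sum the inequality $\sum_{i\in S}K_i\geq0$ over all $k$-subsets $S$ of $\{1,\dots,n-1\}$. Each index $i$ lies in exactly $\binom{n-2}{k-1}$ of these subsets, so the sum equals
\[
\binom{n-2}{k-1}\Ric(X,X)\geq0 .
\]
This gives $\Ric(X,X)\geq0$. Since $X$ was an arbitrary unit vector, $\Ric\geq0$.

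\textbf{Step 2: the bound on $K(X,Y)$.} Given an orthonormal pair $X,Y$, choose the basis above with $e_1=Y$. When $k=1$ the hypothesis is exactly $K_1\geq0$, which is the claim. For $k\geq2$ we need $(n-1-k)K_1\geq-(k-1)\Ric(X,X)$. Substituting $\Ric(X,X)=K_1+T$ with $T=\sum_{i\geq2}K_i$, this is equivalent to
\[
(n-2)K_1+(k-1)T\geq0 .
\]
To get this, consider the $k$-subsets that contain the index $1$; these are $\{1\}\cup S'$ with $S'$ a $(k-1)$-subset of $\{2,\dots,n-1\}$. Summing $K_1+\sum_{i\in S'}K_i\geq0$ over all $\binom{n-2}{k-1}$ such $S'$, and noting that each index $i\geq2$ appears in $\binom{n-3}{k-2}$ of them, yields
\[
\binom{n-2}{k-1}K_1+\binom{n-3}{k-2}T\geq0 .
\]
Dividing by $\binom{n-3}{k-2}$ and using $\binom{n-2}{k-1}\big/\binom{n-3}{k-2}=\frac{n-2}{k-1}$ gives $\frac{n-2}{k-1}K_1+T\geq0$. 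This is the desired inequality. The assumption $k\leq n-2$ guarantees $n-1-k>0$, so dividing by $n-1-k$ to reach \eqref{eq:sectional-lower-bound} is legitimate.

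The only step needing care is choosing the right averaging for the second inequality: one must average only over the subsets containing $Y$, not over all subsets. The rest is binomial bookkeeping.
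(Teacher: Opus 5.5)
Your argument is correct and reaches the same constant as the paper, but by a different mechanism.

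\textbf{The paper's route.} The paper fixes an orthonormal basis of $X^\perp$ and orders the curvatures $\kappa_1\le\cdots\le\kappa_{n-1}$. It then uses only the single inequality $\kappa_1+\cdots+\kappa_k\ge0$ together with monotonicity. This forces $\kappa_k\ge0$, hence $\kappa_\alpha\ge0$ for $\alpha\ge k$ and $\Ric(X,X)\ge0$. If $\kappa_1=-a<0$, it also forces $\kappa_\alpha\ge\kappa_k\ge a/(k-1)$ for every $\alpha\ge k$, so the last $n-1-k$ terms contribute at least $\frac{n-1-k}{k-1}a$ to $\Ric(X,X)$. The bound for an arbitrary $Y$ follows implicitly by putting $Y$ in the basis and using $K(X,Y)\ge\kappa_1$.

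\textbf{Your route.} You replace both the ordering and the case split on the sign of $\kappa_1$ by an explicit nonnegative linear combination of instances of the hypothesis. In Step 2 you use only the $k$-frames containing $Y$. The target inequality $(n-2)K(X,Y)+(k-1)\sum_{i\ge2}K(X,e_i)\ge0$ then appears directly as an averaged certificate. Your binomial bookkeeping, including $\binom{n-2}{k-1}/\binom{n-3}{k-2}=\frac{n-2}{k-1}$, is right. You also correctly treat $k=1$ separately, since there $\binom{n-3}{k-2}=0$ and the division is unavailable.

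\textbf{What each buys.} Your version is slicker and makes transparent exactly which instances of $\Ric_k\ge0$ produce the constant. The paper's ordering argument gives a bit more structural information along the way. At most $k-1$ of the $\kappa_\alpha$ in any frame through $X$ can be negative. A negative one forces all curvatures beyond the $k$-th to be uniformly large. This is the cancellation mechanism the paper later points to when explaining why its $\Ric\geq0$ counterexamples violate $\Ric_{n-2}\geq0$.
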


\begin{proof}
Fix a unit vector $X$, and let $e_1,\ldots,e_{n-1}$ be an orthonormal basis of $X^\perp$. Set $\kappa_\alpha=K(X,e_\alpha)$ and, after relabeling, assume
$
\kappa_1\leq\kappa_2\leq\cdots\leq\kappa_{n-1}.
$
The condition $\Ric_k\geq0$ implies
\begin{equation}\label{eq:k-smallest}
\kappa_1+\cdots+\kappa_k\geq0.
\end{equation}
In particular, $\kappa_k\geq0$, and hence $\kappa_\alpha\geq0$ for all $\alpha\geq k$. It follows immediately that
\[
\Ric(X,X)=\sum_{\alpha=1}^{n-1}\kappa_\alpha\geq0.
\]
Since $X$ was arbitrary, $\Ric\geq0$.

For $k=1$, \eqref{eq:sectional-lower-bound} is precisely the assumption of nonnegative sectional curvature. We therefore assume $k\geq2$. There is nothing to prove if $\kappa_1\geq0$. Suppose that $\kappa_1=-a<0$. By~\eqref{eq:k-smallest},
\[
\kappa_2+\cdots+\kappa_k\geq a.
\]
Since $\kappa_2\leq\cdots\leq\kappa_k$, we obtain
$
\kappa_k\geq\frac{a}{k-1}.
$
Consequently, $\kappa_\alpha\geq a/(k-1)$ for every $\alpha\geq k+1$. Using again~\eqref{eq:k-smallest}, we find
\[
\begin{aligned}
\Ric(X,X)
&=\sum_{\alpha=1}^{k}\kappa_\alpha
+\sum_{\alpha=k+1}^{n-1}\kappa_\alpha\\
&\geq\frac{n-1-k}{k-1}a.
\end{aligned}
\]
Thus
\[
a\leq\frac{k-1}{n-1-k}\Ric(X,X),
\]
which proves~\eqref{eq:sectional-lower-bound}.
\end{proof}

The following consequence will also be crucial in the proof of the splitting statement.

\begin{corollary}\label{cor:null-jacobi}
Under the assumptions of Lemma~\ref{lem:intermediate-algebra}, if $V\in\ker\Ric$, then
\begin{equation}\label{eq:null-sectional}
K(V,X)=0
\end{equation}
for every $X\perp V$. Equivalently,
\begin{equation}\label{eq:null-jacobi-tensor}
R(V,\cdot,V,\cdot)\equiv0.
\end{equation}
\end{corollary}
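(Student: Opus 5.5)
We take $V$ to be a nonzero null vector; the case $V=0$ is trivial, and by homogeneity we may normalize $|V|=1$. The plan is to apply Lemma~\ref{lem:intermediate-algebra} twice: once to force all sectional curvatures $K(V,\cdot)$ to be nonnegative, and once more, in a refined form, to force them to vanish. We then polarize to obtain the tensor identity \eqref{eq:null-jacobi-tensor}.

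\emph{Step 1: nonnegativity.} Since $\Ric(V,V)=0$, inequality \eqref{eq:sectional-lower-bound} with $X=V$ gives $K(V,X)\geq -\frac{k-1}{n-1-k}\cdot 0=0$ for every unit $X\perp V$. For $k=1$ this is just the hypothesis.

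\emph{Step 2: vanishing.} Take an orthonormal basis $e_1,\dots,e_{n-1}$ of $V^\perp$. Then
\[
0=\Ric(V,V)=\sum_{\alpha=1}^{n-1}K(V,e_\alpha).
\]
Every summand is nonnegative by Step 1, so every summand is zero. Any unit $X\perp V$ can be taken as $e_1$ of such a basis, so $K(V,X)=0$ for all $X\perp V$, which is \eqref{eq:null-sectional}.

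\emph{Step 3: polarization.} Consider the symmetric bilinear form $B(X,Y)=R(V,X,V,Y)$ on $TM$.
\begin{itemize}
\item $B(V,\cdot)=0$ by the antisymmetry of $R$.
\item On $V^\perp$, the quadratic form is $B(X,X)=|X|^2K(V,X)=0$ by Step 2.
\item Since $B$ is symmetric, polarization then gives $B\equiv0$ on $V^\perp$.
\item Writing a general vector as $cV+X$ with $X\perp V$ and using the first point, $B$ vanishes identically.
\end{itemize}
This proves \eqref{eq:null-jacobi-tensor}. Conversely, \eqref{eq:null-jacobi-tensor} immediately implies \eqref{eq:null-sectional}, which gives the stated equivalence.

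No step is a genuine obstacle. The one point requiring care is that Step 2 needs the lower bound $K(V,X)\geq0$ for \emph{every} $X\perp V$, not only for the basis vectors. Lemma~\ref{lem:intermediate-algebra} supplies exactly this, and it is also where the restriction $k\leq n-2$ enters: the constant $\frac{k-1}{n-1-k}$ is finite only in that range.
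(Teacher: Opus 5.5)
Your proposal is correct and follows the paper's proof essentially step for step: $\Ric(V,V)=0$ in \eqref{eq:sectional-lower-bound} gives $K(V,X)\geq0$, the identity $\Ric(V,V)=\sum_\alpha K(V,e_\alpha)=0$ forces every term to vanish, and polarizing $X\mapsto R(V,X,V,X)$ yields \eqref{eq:null-jacobi-tensor}. Your normalization $|V|=1$ and your separate treatment of the $V$-component in the polarization step only make explicit details that the paper leaves implicit.
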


\begin{proof}
By~\eqref{eq:sectional-lower-bound}, $K(V,X)\geq0$ for every unit vector $X\perp V$, since $\Ric(V,V)=0$. On the other hand,
\[
0=\Ric(V,V)=\sum_{\alpha=1}^{n-1}K(V,e_\alpha)
\]
for any orthonormal basis of $V^\perp$. Hence every summand vanishes, proving~\eqref{eq:null-sectional}. Polarizing the quadratic form $X\mapsto R(V,X,V,X)$ gives~\eqref{eq:null-jacobi-tensor}.
\end{proof}

We recall next the elliptic equation satisfied by the Ricci tensor on a manifold with harmonic curvature. Since $\delta\Rm=0$, the second Bianchi identity implies that the Ricci tensor is Codazzi:
\begin{equation}\label{eq:ricci-codazzi}
\nabla_kR_{ij}=\nabla_iR_{kj}.
\end{equation}
Taking a trace and using the contracted Bianchi identity shows that the scalar curvature $R$ is constant. Commuting covariant derivatives in~\eqref{eq:ricci-codazzi} gives
\begin{equation}\label{eq:ricci-laplacian}
\Delta R_{ij}=R_{il}R_{jl}-R_{ikjl}R_{kl}.
\end{equation}

For a symmetric $2$-tensor $A$, let $\lambda_1(A)\leq\cdots\leq\lambda_n(A)$ denote its eigenvalues and define $\sigma_\ell(A)=\lambda_1(A)+\cdots+\lambda_\ell(A)$. We use the following consequence of the subtrace lemma in \cite{BryanIvakiScheuer}.

\begin{lemma}\label{lem:subtrace-viscosity}
Let $A$ be a smooth symmetric $2$-tensor on $M$, and fix $p\in M$.
Choose an orthonormal eigenbasis $e_1,\ldots,e_n$ of $A$ at $p$, with
eigenvalues $\lambda_1\leq\cdots\leq\lambda_n$. If $\xi$ is a $C^2$
lower support function for $\sigma_\ell(A)$ at $p$, then for every
$X\in T_pM$,
\begin{equation}\label{eq:subtrace-gradient}
\nabla_X\xi(p)
=
\sum_{i=1}^{\ell}(\nabla_XA)(e_i,e_i),
\end{equation}
and
\[
\Delta\xi(p)\leq\sum_{i=1}^{\ell}(\Delta A)(e_i,e_i).
\]
\end{lemma}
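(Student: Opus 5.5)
The plan is to compare $\xi$ with the family of \emph{test functions} obtained by freezing a frame, following \cite{BryanIvakiScheuer}. Extend $e_1,\ldots,e_n$ to a smooth local orthonormal frame near $p$, chosen so that $\nabla e_i(p)=0$; for instance, parallel translate along radial geodesics from $p$. Set
\[
\phi(x)=\sum_{i=1}^{\ell}A(e_i(x),e_i(x)).
\]
Two facts are needed. First, by the Ky Fan variational characterization, $\sigma_\ell(A)(x)=\min\{\sum_{i\le\ell}A(v_i,v_i)\}$, where the minimum runs over orthonormal $\ell$-frames $\{v_i\}$ at $x$. Hence $\sigma_\ell(A)\le\phi$ near $p$. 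Second, equality $\sigma_\ell(A)(p)=\phi(p)$ holds at $p$, since the $e_i$ are the bottom eigenvectors there.

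Since $\xi$ is a lower support function, $\xi\le\sigma_\ell(A)$ near $p$ with equality at $p$. Combining this with the two facts gives $\xi\le\phi$ near $p$, again with equality at $p$. So $\phi-\xi$ is a $C^2$ function attaining a local minimum at $p$. The first-order condition gives $\nabla\xi(p)=\nabla\phi(p)$. The second-order condition gives $\nabla^2(\phi-\xi)(p)\ge0$, and taking the trace yields $\Delta\xi(p)\le\Delta\phi(p)$.

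It remains to compute the derivatives of $\phi$ at $p$. By the Leibniz rule, $X\big(A(e_i,e_i)\big)=(\nabla_XA)(e_i,e_i)+2A(\nabla_Xe_i,e_i)$, and the second term vanishes at $p$. This gives \eqref{eq:subtrace-gradient}. For the Laplacian, differentiate twice in the radial-parallel frame. Along each geodesic $\gamma$ through $p$, the vectors $e_i$ are parallel, so $\frac{d^2}{dt^2}A(e_i,e_i)=(\nabla^2_{\dot\gamma,\dot\gamma}A)(e_i,e_i)$ along $\gamma$. Taking the geodesic direction to run through an orthonormal basis and summing gives
\[
\Delta\phi(p)=\sum_{i\le\ell}(\Delta A)(e_i,e_i).
\]
Here we use that $\Delta\phi(p)$ is the sum of the second derivatives of $\phi$ along the geodesics $\exp_p(tE_j)$.

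The step needing care is this second-derivative identity. The frame is parallel only along radial geodesics, so mixed second derivatives of $e_i$ need not vanish. The trace of the Hessian, however, is computed purely along radial geodesics through $p$, where the frame is parallel, so only $\nabla^2A$ contributes. Alternatively, one can allow repeated eigenvalues and an arbitrary frame with $\nabla e_i(p)=0$: the extra term $\sum_i 2A(\Delta e_i,e_i)(p)=2\lambda_i\langle\Delta e_i,e_i\rangle$ vanishes because $\langle\Delta e_i,e_i\rangle=-|\nabla e_i|^2=0$ at $p$. This route handles the general case cleanly, and no assumption on distinct eigenvalues is required, since the minimality argument only uses the Ky Fan inequality.
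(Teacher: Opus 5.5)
Your proposal is correct and matches the paper's proof: you extend the eigenframe by radial parallel transport, use the comparison $\xi\le\sigma_\ell(A)\le\sum_{i\le\ell}A(E_i,E_i)$ with equality at $p$, and compute the first and second derivatives along radial geodesics, and you also spell out the Ky Fan step that the paper leaves implicit. Your alternative with an arbitrary frame satisfying $\nabla e_i(p)=0$ is also valid, since $A(\Delta e_i,e_i)(p)=\lambda_i\langle\Delta e_i,e_i\rangle=-\lambda_i|\nabla e_i|^2=0$ at $p$.
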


\begin{proof}
Extend $e_1,\ldots,e_n$ to local orthonormal vector fields
$E_1,\ldots,E_n$ by radial parallel transport from $p$. Then
\[
\xi(q)\leq \sigma_\ell(A)(q)
\leq \sum_{i=1}^{\ell}A(E_i,E_i)(q)
\]
near $p$, and equality holds at $p$. Hence the first derivatives agree
at $p$, which gives \eqref{eq:subtrace-gradient}. Taking second
derivatives along geodesics issuing from $p$ and summing over an
orthonormal basis yields
\[
\Delta\xi(p)\leq
\sum_{i=1}^{\ell}(\Delta A)(e_i,e_i).
\]
\end{proof}

We apply the lemma to $A=\Ric$. At a fixed point $p$, choose Ricci eigenvectors such that
\[
\Ric(e_i,e_j)=\lambda_i\delta_{ij},
\qquad
0\leq\lambda_1\leq\cdots\leq\lambda_n,
\]
where the nonnegativity follows from Lemma~\ref{lem:intermediate-algebra}. In this frame,~\eqref{eq:ricci-laplacian} gives
\begin{equation}\label{eq:ric-eigen-laplacian}
(\Delta\Ric)_{ii}
=
\lambda_i^2-\sum_{j=1}^nK_{ij}\lambda_j,
\end{equation}
where $K_{ij}=K(e_i,e_j)$.

Set $c_{n,k}:=(k-1)/(n-1-k)$. By Lemma~\ref{lem:intermediate-algebra}, $K_{ij}\geq-c_{n,k}\lambda_i$. Since every $\lambda_j\geq0$,
\begin{align}
-\sum_jK_{ij}\lambda_j
&\leq-\sum_{K_{ij}<0}K_{ij}\lambda_j\nonumber\\
&\leq c_{n,k}\lambda_i
\sum_{K_{ij}<0}\lambda_j\nonumber\\
&\leq c_{n,k}R\lambda_i.
\label{eq:weighted-curvature-estimate}
\end{align}
Moreover, $0\leq\lambda_i\leq R$, and hence $\lambda_i^2\leq R\lambda_i$. Combining this with~\eqref{eq:ric-eigen-laplacian} and~\eqref{eq:weighted-curvature-estimate}, we obtain
\begin{equation}\label{eq:ric-component-estimate}
(\Delta\Ric)_{ii}
\leq
\frac{n-2}{n-1-k}R\lambda_i.
\end{equation}

Consequently, Lemma~\ref{lem:subtrace-viscosity} yields the following.

\begin{proposition}\label{prop:subtrace-inequality}
For every $1\leq\ell\leq n$, the function $\sigma_\ell=\lambda_1+\cdots+\lambda_\ell$ satisfies
\begin{equation}\label{eq:main-subtrace}
\Delta\sigma_\ell
\leq
\frac{n-2}{n-1-k}R\,\sigma_\ell
\end{equation}
in the viscosity sense.
\end{proposition}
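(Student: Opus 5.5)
The plan is to verify the viscosity inequality directly from its definition, using Lemma~\ref{lem:subtrace-viscosity} with $A=\Ric$ together with the pointwise estimate \eqref{eq:ric-component-estimate}. Recall that $\Delta\sigma_\ell\leq C\sigma_\ell$ in the viscosity sense means: whenever $\xi$ is a $C^2$ function with $\xi\leq\sigma_\ell$ near $p$ and $\xi(p)=\sigma_\ell(p)$, we have $\Delta\xi(p)\leq C\,\xi(p)$. (This is the supersolution-type inequality needed for the strong maximum principle, i.e.\ testing from below.) Here $C=\frac{n-2}{n-1-k}R$. This is a genuine constant, because $R$ is constant on a manifold with harmonic curvature, as noted after \eqref{eq:ricci-codazzi}.

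Fix $p$ and such a lower support function $\xi$. Choose an orthonormal Ricci eigenbasis $e_1,\ldots,e_n$ at $p$ with eigenvalues $0\leq\lambda_1\leq\cdots\leq\lambda_n$; nonnegativity comes from Lemma~\ref{lem:intermediate-algebra}. Lemma~\ref{lem:subtrace-viscosity} gives
\[
\Delta\xi(p)\leq\sum_{i=1}^{\ell}(\Delta\Ric)(e_i,e_i).
\]
Each summand is bounded using \eqref{eq:ric-component-estimate}:
\[
(\Delta\Ric)(e_i,e_i)\leq\frac{n-2}{n-1-k}R\lambda_i .
\]
This bound rests on three facts. First, \eqref{eq:ric-eigen-laplacian} holds in the eigenframe. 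Second, $\lambda_i^2\leq R\lambda_i$, because $0\leq\lambda_i\leq R$ when all eigenvalues are nonnegative. Third, \eqref{eq:weighted-curvature-estimate} gives the bound $c_{n,k}R\lambda_i$. The constants add as
\[
1+\frac{k-1}{n-1-k}=\frac{n-2}{n-1-k}.
\]
Summing over $i\leq\ell$ yields
\[
\Delta\xi(p)\leq\frac{n-2}{n-1-k}R\sum_{i\leq\ell}\lambda_i=\frac{n-2}{n-1-k}R\,\sigma_\ell(p)=\frac{n-2}{n-1-k}R\,\xi(p),
\]
which is the claim.

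The only delicate points are bookkeeping.

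\emph{Validity of \eqref{eq:weighted-curvature-estimate}.} In $\sum_j K_{ij}\lambda_j$ the term $j=i$ has $K_{ii}=0$ by convention, so it can be dropped. For $j\neq i$, Lemma~\ref{lem:intermediate-algebra} applied to the orthonormal pair $(e_i,e_j)$ gives $K_{ij}\geq -c_{n,k}\lambda_i$. Bounding $\sum_{K_{ij}<0}\lambda_j\leq R$ uses $\lambda_j\geq0$. When $k=1$ the constant $c_{n,k}$ vanishes and the bound holds trivially.

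\emph{Frame independence.} The test inequality must hold for an arbitrary eigenbasis at $p$, including when eigenvalues are repeated. Lemma~\ref{lem:subtrace-viscosity} is stated for any choice of eigenbasis, so no issue arises there.

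I expect no real obstacle here. The substantive input, namely the subtrace viscosity lemma and the curvature estimate, has already been established. The step that needs care is confirming that the sign conventions in \eqref{eq:ricci-laplacian} produce exactly \eqref{eq:ric-eigen-laplacian} in the eigenframe. Specifically, one should check that
\[
R_{ikjl}R_{kl}\big|_{i=j}=\sum_k K_{ik}\lambda_k
\]
under the paper's convention $K(X,Y)=R(X,Y,X,Y)$.
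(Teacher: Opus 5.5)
Your proposal is correct and follows the paper's proof: take a lower support $\xi$ at $p$, apply Lemma~\ref{lem:subtrace-viscosity} with $A=\Ric$, bound each diagonal term by \eqref{eq:ric-component-estimate}, and sum over $i\leq\ell$ using $\xi(p)=\sigma_\ell(p)$. Your extra checks are accurate: the vanishing $j=i$ term, the identity $R_{ikil}R_{kl}=\sum_k K_{ik}\lambda_k$, and the constant $1+c_{n,k}=\frac{n-2}{n-1-k}$.
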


\begin{proof}
Let $\xi$ be any smooth lower support for $\sigma_\ell$ at $p$. By Lemma~\ref{lem:subtrace-viscosity} and~\eqref{eq:ric-component-estimate},
\[
\Delta\xi(p)
\leq
\sum_{i=1}^{\ell}(\Delta\Ric)_{ii}
\leq
\frac{n-2}{n-1-k}R
\sum_{i=1}^{\ell}\lambda_i
=
\frac{n-2}{n-1-k}R\,\sigma_\ell(p).
\]
This is precisely~\eqref{eq:main-subtrace} in the viscosity sense.
\end{proof}

We can now prove the constant rank assertion.

\begin{proof}[Proof of the constant rank part of Theorem~\ref{thm:mainA}]
If $R=0$, then $\Ric\geq0$ immediately implies $\Ric\equiv0$, and there is nothing to prove. We may therefore assume $R>0$.

Let
\[
m=\max_{x\in M}\dim\ker\Ric_x.
\]
If $m=0$, then $\Ric>0$ everywhere and the assertion follows. Suppose $m\geq1$, and choose $p\in M$ with $\dim\ker\Ric_p=m$. Then $\sigma_m(p)=0$, while $\sigma_m\geq0$ on $M$. By Proposition~\ref{prop:subtrace-inequality},
\[
\Delta\sigma_m
-
\frac{n-2}{n-1-k}R\,\sigma_m
\leq0
\]
in the viscosity sense. The strong minimum principle implies $\sigma_m\equiv0$ on the connected manifold $M$.

Thus $\dim\ker\Ric_x\geq m$ for every $x\in M$. By the definition of $m$, equality must hold everywhere. Hence $\Ric$ has constant rank.
\end{proof}

We next prove that the Ricci-null distribution is parallel. Since the Ricci tensor has constant rank, $\mathcal N:=\ker\Ric$ is a smooth vector subbundle of $TM$.

\begin{proposition}\label{prop:nullity-parallel}
Under the assumptions of Theorem~\ref{thm:mainA}, the distribution $\mathcal N=\ker\Ric$ is parallel.
\end{proposition}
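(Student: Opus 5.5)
Since $\Ric$ has constant rank, $\mathcal N=\ker\Ric$ is a smooth subbundle, and it suffices to show $\Ric(\nabla_X V,\cdot)=0$ for every local section $V$ of $\mathcal N$ and every $X$. Differentiating $\Ric(V,\cdot)=0$ gives
\[
(\nabla_X\Ric)(V,Y)+\Ric(\nabla_XV,Y)=0 .
\]
So the claim is equivalent to $(\nabla_X\Ric)(V,\cdot)=0$ for all $X$ and all $V\in\mathcal N$, i.e.\ $\nabla\Ric$ annihilates null directions.

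I will extract this from the equation $\sigma_m\equiv0$ and the Bochner-type identity~\eqref{eq:ricci-laplacian}. Fix $p$ and choose a local orthonormal frame $E_1,\dots,E_m$ of $\mathcal N$. Since $\Ric(E_a,E_a)\equiv0$, we have $\sum_a(\nabla^2\Ric)(E_a,E_a)=\Delta\big(\sum_a\Ric(E_a,E_a)\big)-(\text{terms involving }\nabla E_a)$. Expanding $\Delta[\Ric(E_a,E_a)]=0$ gives
\[
0=(\Delta\Ric)(E_a,E_a)+4(\nabla_{e_i}\Ric)(\nabla_{e_i}E_a,E_a)+2\Ric(\nabla_{e_i}E_a,\nabla_{e_i}E_a)+2\Ric(\Delta E_a,E_a).
\]
The last term vanishes. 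By~\eqref{eq:ricci-laplacian} and Corollary~\ref{cor:null-jacobi}, $(\Delta\Ric)(E_a,E_a)=|\Ric E_a|^2-R(E_a,e_k,E_a,e_l)R_{kl}=0$. From the differentiated identity above, $(\nabla_{e_i}\Ric)(E_a,\nabla_{e_i}E_a)=-\Ric(\nabla_{e_i}E_a,\nabla_{e_i}E_a)$. Substituting gives
\[
\sum_{i,a}\Ric(\nabla_{e_i}E_a,\nabla_{e_i}E_a)=0 .
\]
Since $\Ric\geq0$ by Lemma~\ref{lem:intermediate-algebra}, this forces $\nabla_{e_i}E_a\in\mathcal N$ for all $i,a$. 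Hence $\mathcal N$ is parallel.

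The main obstacle is the sign bookkeeping in this expansion, namely confirming that the cross term combines with the $\Ric(\nabla E,\nabla E)$ term to give a coefficient of definite sign. Here the coefficient is $-4+2=-2$, so the conclusion holds, but the algebra must be checked with care. If it failed, I would fall back on the Codazzi property~\eqref{eq:ricci-codazzi}. Write $(\nabla_X\Ric)(V,Y)=(\nabla_V\Ric)(X,Y)$ and set $B_V=\nabla_V\Ric$, which is symmetric. Then show $B_V$ vanishes on $\mathcal N^\perp$ using $\operatorname{tr}B_V=\nabla_VR=0$ together with a second-variation argument at the minimum $\sigma_m=0$. Concretely, since $\sigma_m\equiv0$ is a minimum value, the first derivative of $\sum_a\Ric(E_a,E_a)$ vanishes, and the second-order Hessian condition yields nonnegativity that combines with the identity above.
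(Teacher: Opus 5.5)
Your argument is correct and is the paper's proof: expand $\Delta(\Ric(V,V))=0$, show $(\Delta\Ric)(V,V)=0$ via~\eqref{eq:ricci-laplacian} and Corollary~\ref{cor:null-jacobi}, and use $(\nabla_{e_i}\Ric)(\nabla_{e_i}V,V)=-\Ric(\nabla_{e_i}V,\nabla_{e_i}V)$ to obtain $\sum_i\Ric(\nabla_{e_i}V,\nabla_{e_i}V)=0$. Summing over a frame $E_a$ of $\mathcal N$ rather than using a single section is immaterial, and since your coefficient check $-4+2=-2$ is right, the Codazzi fallback is not needed.
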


\begin{proof}
Let $V$ be a local smooth section of $\mathcal N$. Thus $\Ric(V,\cdot)=0$, and in particular $\Ric(V,V)\equiv0$. Let $e_1,\ldots,e_n$ be a local orthonormal frame which is normal at a fixed point. Differentiating twice gives
\begin{align}
0
&=
\Delta\bigl(\Ric(V,V)\bigr)\nonumber\\
&=
(\Delta\Ric)(V,V)
+
4\sum_a(\nabla_{e_a}\Ric)(\nabla_{e_a}V,V)
+
2\Ric(\Delta V,V)\nonumber\\
&\qquad
+
2\sum_a\Ric(\nabla_{e_a}V,\nabla_{e_a}V).
\label{eq:nullity-bochner-1}
\end{align}
Since $V\in\ker\Ric$, we have $\Ric(\Delta V,V)=0$ and $\Ric(\nabla_{e_a}V,V)=0$. Differentiating the latter identity and summing over $a$ yields
\[
\sum_a(\nabla_{e_a}\Ric)(\nabla_{e_a}V,V)
=
-\sum_a\Ric(\nabla_{e_a}V,\nabla_{e_a}V).
\]
Substituting into~\eqref{eq:nullity-bochner-1}, we obtain
\begin{equation}\label{eq:nullity-bochner-2}
0
=
(\Delta\Ric)(V,V)
-
2\sum_a\Ric(\nabla_{e_a}V,\nabla_{e_a}V).
\end{equation}

By~\eqref{eq:ricci-laplacian},
\[
(\Delta\Ric)(V,V)
=
|\Ric(V,\cdot)|^2
-
R(V,e_i,V,e_j)R_{ij}.
\]
The first term vanishes because $V\in\ker\Ric$, while the second vanishes by Corollary~\ref{cor:null-jacobi}. Hence $(\Delta\Ric)(V,V)=0$. Equation~\eqref{eq:nullity-bochner-2} therefore gives
\[
\sum_a\Ric(\nabla_{e_a}V,\nabla_{e_a}V)=0.
\]
Since $\Ric\geq0$, every term on the left-hand side is nonnegative. Consequently, $\Ric(\nabla_XV,\nabla_XV)=0$ for every tangent vector $X$, and hence $\nabla_XV\in\ker\Ric$. Thus $\mathcal N$ is parallel.
\end{proof}

Now we can finish the proof of Theorem~\ref{thm:mainA}.

\begin{proof}[Proof of the splitting assertion in Theorem~\ref{thm:mainA}]
Assume now that $(M,g)$ is complete. The lift of $\mathcal N$ to the universal cover $(\widetilde M,\widetilde g)$ is a parallel distribution, and so is its orthogonal complement. By the de Rham decomposition theorem,
\[
(\widetilde M,\widetilde g)
=
(M_0^m,g_0)\times(N^{n-m},g_N),
\]
where $m=\dim\ker\Ric$.

For every $V\in TM_0$, Corollary~\ref{cor:null-jacobi} gives $R(V,\cdot,V,\cdot)=0$. In particular, all sectional curvatures of $M_0$ vanish. Since $M_0$ is complete and simply connected, $(M_0,g_0)\cong\mathbb R^m$. Finally, the kernel of the Ricci tensor is exactly the tangent bundle of the Euclidean factor, and hence $\Ric_{g_N}>0$. Therefore
\[
(\widetilde M,\widetilde g)
\cong
\mathbb R^m\times(N^{n-m},g_N),
\qquad
\Ric_{g_N}>0.
\]
\end{proof}

Lastly, we show that the range $1\leq k\leq n-2$ in Theorem~\ref{thm:mainA} cannot in general be extended to the endpoint $k=n-1$. Thus ordinary nonnegative Ricci curvature is not sufficient for the constant rank property in dimensions $n\geq4$.

We first construct a four-dimensional local example. Let
$
M_\varepsilon=(-\varepsilon,\varepsilon)\times\mathbb S^2\times\mathbb S^1
$
and consider a metric of the form
\begin{equation}\label{eq:double-warp}
g=dt^2+f(t)^2g_{\mathbb S^2}+h(t)^2d\theta^2,
\end{equation}
where $g_{\mathbb S^2}$ is the standard metric of sectional curvature $1$, and $f,h>0$. Set $a=f'/f$ and $b=h'/h$.

Let $e_0=\partial_t$, let $\widetilde e_1,\widetilde e_2$ be an
orthonormal frame for the standard sphere, and set
\[
e_A=\frac{1}{f}\widetilde e_A,\quad A=1,2,
\qquad
e_3=\frac{1}{h}\partial_\theta .
\]
By the standard curvature formulas for a multiply warped product, the
nontrivial sectional curvatures are

\begin{align}
K_{0A}&=-\frac{f''}{f}=-(a'+a^2),\qquad A=1,2,
\label{eq:K0A}\\
K_{12}&=\frac{1-(f')^2}{f^2}=f^{-2}-a^2,\notag\\
K_{03}&=-\frac{h''}{h}=-(b'+b^2),\notag\\
K_{A3}&=-\frac{f'h'}{fh}=-ab,\qquad A=1,2.
\label{eq:KA3}
\end{align}

It follows that the Ricci tensor is diagonal with respect to this splitting. Denote its three distinct eigenvalue functions by $\rho_0,\rho_1,\rho_3$, where $\rho_1$ has multiplicity two. From~\eqref{eq:K0A}--\eqref{eq:KA3}, we obtain
\begin{align}
\rho_0&=-2(a'+a^2)-(b'+b^2),
\label{eq:rho0}\\
\rho_1&=-a'-2a^2-ab+f^{-2},\notag\\
\rho_3&=-b'-b^2-2ab.
\label{eq:rho3}
\end{align}

We next identify the harmonic-curvature condition.

\begin{lemma}\label{lem:codazzi-warp}
For the metric~\eqref{eq:double-warp}, the Ricci tensor is Codazzi if and only if
\begin{align}
\rho_1'&=a(\rho_0-\rho_1),
\label{eq:codazzi-rho1}\\
\rho_3'&=b(\rho_0-\rho_3).
\label{eq:codazzi-rho3}
\end{align}
Consequently,~\eqref{eq:codazzi-rho1}--\eqref{eq:codazzi-rho3} are equivalent to $g$ having harmonic curvature.
\end{lemma}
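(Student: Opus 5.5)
Since harmonic curvature is equivalent to the Codazzi condition \eqref{eq:ricci-codazzi}, it suffices to characterize when $\Ric$ of \eqref{eq:double-warp} is Codazzi. The plan is to compute $\nabla\Ric$ directly in the orthonormal frame $e_0,e_1,e_2,e_3$. First record the connection of the doubly warped product. Since $t$-lines are unit-speed geodesics, $\nabla_{e_0}e_0=0$. The remaining covariant derivatives are
\[
\nabla_{e_A}e_0=a\,e_A,\qquad \nabla_{e_3}e_0=b\,e_3,\qquad \nabla_{e_A}e_A=-a\,e_0+(\text{tangential to }\mathbb S^2),\qquad \nabla_{e_3}e_3=-b\,e_0 .
\]
The mixed derivatives $\nabla_{e_A}e_3$ and $\nabla_{e_3}e_A$ have no component outside the span of $\{e_1,e_2,e_3\}$ in the relevant directions; more precisely, $\nabla_{e_A}e_3=0$ and $\nabla_{e_3}e_A=0$, because $h$ depends only on $t$. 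Also $\nabla_{e_0}e_A=\nabla_{e_0}e_3=0$ for the rescaled frame.

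Next write $\Ric=\rho_0\,e^0\otimes e^0+\rho_1\sum_A e^A\otimes e^A+\rho_3\,e^3\otimes e^3$, with all $\rho$'s depending only on $t$. Then compute $T_{ijk}:=(\nabla_{e_i}\Ric)(e_j,e_k)=e_i(\Ric_{jk})-\Ric(\nabla_{e_i}e_j,e_k)-\Ric(e_j,\nabla_{e_i}e_k)$. Because $\Ric$ is diagonal with eigenvalues constant on the level sets, the only possibly nonzero components are the following:
\begin{align*}
T_{0jj}&=\rho_j', \\
T_{A0A}&=T_{AA0}=(\rho_0-\rho_1)\langle\nabla_{e_A}e_0,e_A\rangle=a(\rho_0-\rho_1),\\
T_{303}&=T_{330}=b(\rho_0-\rho_3).
\end{align*}
Components such as $T_{A33}$ or $T_{3AB}$ vanish. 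The reason is that $e_A(\rho_3)=0$ and the cross terms pair eigenspaces with equal eigenvalues. The terms mixing $e_1,e_2$ vanish because $\rho_1$ has multiplicity two on the sphere factor, so tangential connection terms cancel.

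The Codazzi condition $T_{ijk}=T_{jik}$ is then checked index type by index type. The nontrivial instances are $T_{A0A}=T_{0AA}$ and $T_{303}=T_{033}$, which are exactly \eqref{eq:codazzi-rho1} and \eqref{eq:codazzi-rho3}. All other symmetric pairs are of the form $0=0$. For example, $T_{0A0}=0=T_{A00}$ because $e_A(\rho_0)=0$ and $\langle\nabla_{e_A}e_0,e_0\rangle=0$.

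The converse direction is immediate from the same table. The final equivalence with harmonic curvature follows from the second Bianchi identity: $\delta\Rm$ equals the Codazzi defect $\nabla_kR_{ij}-\nabla_iR_{kj}$, up to sign. The only real care needed is the bookkeeping of the connection coefficients, especially confirming that the sphere-tangential terms do not contribute. This holds because the sphere directions carry the single eigenvalue $\rho_1$.
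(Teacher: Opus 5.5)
Your proposal is correct and follows the same route as the paper. You compute $\nabla\Ric$ in the adapted orthonormal frame using $\nabla_{e_A}e_0=a e_A$ and $\nabla_{e_3}e_0=b e_3$, and then identify the only nontrivial Codazzi pairs as $(\nabla_{e_0}\Ric)(e_A,e_A)=(\nabla_{e_A}\Ric)(e_0,e_A)$ and its $e_3$ analogue. You then conclude via the contracted second Bianchi identity, and your component-by-component table spells out what the paper only asserts when it says the remaining Codazzi equations are ``automatic by symmetry''.
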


\begin{proof}
The connection formulas for the multiply warped product give
\[
\nabla_{e_A}e_0=ae_A,
\qquad
\nabla_{e_3}e_0=be_3.
\]
Since the Ricci tensor is diagonal and its eigenvalues depend only on $t$,
\[
(\nabla_{e_0}\Ric)(e_A,e_A)=\rho_1',
\qquad
(\nabla_{e_A}\Ric)(e_0,e_A)=a(\rho_0-\rho_1).
\]
Thus the Codazzi equation in the $e_0,e_A$ directions is precisely~\eqref{eq:codazzi-rho1}. Similarly,
\[
(\nabla_{e_0}\Ric)(e_3,e_3)=\rho_3',
\qquad
(\nabla_{e_3}\Ric)(e_0,e_3)=b(\rho_0-\rho_3),
\]
which gives~\eqref{eq:codazzi-rho3}. All remaining Codazzi equations are automatic by the symmetry of the warped product. Finally, by the second Bianchi identity, the Ricci tensor is Codazzi if and only if the Riemann curvature tensor is divergence-free.
\end{proof}

Introduce $u=a'$ and $v=b'$. Substituting~\eqref{eq:rho0}--\eqref{eq:rho3} into~\eqref{eq:codazzi-rho1}--\eqref{eq:codazzi-rho3} and simplifying gives
\begin{equation}\label{eq:ODE-system}
\left\{
\begin{aligned}
f'&=af,\\
h'&=bh,\\
a'&=u,\\
b'&=v,\\
u'&=-3au-bu-af^{-2}-a^2b+ab^2,\\
v'&=2a^2b-2ab^2-2(a+b)v.
\end{aligned}
\right.
\end{equation}
The right-hand side is smooth on the open set $\{f>0,\ h>0\}$. Hence arbitrary initial data with $f,h>0$ determine a unique local smooth solution.

We choose
\begin{equation}\label{eq:initial-data}
f(0)=h(0)=1,
\qquad
a(0)=b(0)=0,
\qquad
u(0)=1,
\qquad
v(0)=-3.
\end{equation}
By the standard local existence and uniqueness theorem for ordinary differential equations, there exists $\varepsilon>0$ and a unique smooth solution of~\eqref{eq:ODE-system} on $(-\varepsilon,\varepsilon)$ satisfying~\eqref{eq:initial-data}. Shrinking $\varepsilon$ if necessary, we may assume $f(t)>0$ and $h(t)>0$.

By construction and Lemma~\ref{lem:codazzi-warp}, the resulting metric has harmonic curvature. At $t=0$, formulas~\eqref{eq:rho0}--\eqref{eq:rho3} give
\begin{equation}\label{eq:ricci-initial}
\rho_0(0)=1,
\qquad
\rho_1(0)=0,
\qquad
\rho_3(0)=3.
\end{equation}
Thus the Ricci eigenvalues at $t=0$ are $1,0,0,3$.

It remains to show that the double zero eigenvalue becomes positive away from $t=0$. From~\eqref{eq:codazzi-rho1},
$
\rho_1'=a(\rho_0-\rho_1).
$
Using~\eqref{eq:initial-data} and~\eqref{eq:ricci-initial}, we have $\rho_1'(0)=0$. Differentiating once more,
\[
\rho_1''
=
a'(\rho_0-\rho_1)
+
a(\rho_0'-\rho_1'),
\]
and therefore
\[
\rho_1''(0)
=
a'(0)\bigl(\rho_0(0)-\rho_1(0)\bigr)
=
1.
\]
Consequently,
\[
\rho_1(t)=\frac12t^2+o(t^2).
\]
In particular, $\rho_1(t)>0$ for $0<|t|<\varepsilon$ after decreasing $\varepsilon$ if necessary. Since $\rho_0(0)=1$ and $\rho_3(0)=3$, continuity also gives $\rho_0(t)>0$ and $\rho_3(t)>0$ for $|t|<\varepsilon$, after one further decrease of $\varepsilon$.

We have therefore proved the following.

\begin{proposition}\label{prop:4d-counterexample}
There exists $\varepsilon>0$ and a smooth metric
\[
g=dt^2+f(t)^2g_{\mathbb S^2}+h(t)^2d\theta^2
\]
on $(-\varepsilon,\varepsilon)\times\mathbb S^2\times\mathbb S^1$ such that $\delta\Rm=0$, $\Ric\geq0$, and
\[
\rank\Ric
=
\begin{cases}
2,&t=0,\\
4,&0<|t|<\varepsilon.
\end{cases}
\]
In particular, the Ricci tensor does not have constant rank.
\end{proposition}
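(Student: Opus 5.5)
The statement is essentially a summary of the construction just carried out, so the plan is to assemble the pieces already established rather than introduce new arguments.

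\textbf{Smoothness and harmonic curvature.} I will fix the solution $(f,h,a,b,u,v)$ of \eqref{eq:ODE-system} with initial data \eqref{eq:initial-data} on $(-\varepsilon,\varepsilon)$, shrunk so that $f,h>0$. Then $g$ in \eqref{eq:double-warp} is a smooth metric on $(-\varepsilon,\varepsilon)\times\mathbb S^2\times\mathbb S^1$. Two consistency points need checking. First, the variables $a,b$ in the ODE system must really equal $f'/f$ and $h'/h$; this holds because $f'=af$ and $h'=bh$. Second, $u=a'$ and $v=b'$ by construction.

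I will then confirm that the last two equations of \eqref{eq:ODE-system} are exactly \eqref{eq:codazzi-rho1}--\eqref{eq:codazzi-rho3} after substituting \eqref{eq:rho0}--\eqref{eq:rho3}. For example, with $\rho_1=-u-2a^2-ab+f^{-2}$, compute $\rho_1'$ using $(f^{-2})'=-2af^{-2}$, set it equal to $a(\rho_0-\rho_1)$, and solve for $u'$. The $\rho_3$ equation is handled the same way to give $v'$. This algebraic verification is the main potential obstacle, since a sign slip would break the Codazzi property. Once it is done, Lemma~\ref{lem:codazzi-warp} gives $\delta\Rm=0$.

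\textbf{Rank at $t=0$.} Evaluating \eqref{eq:rho0}--\eqref{eq:rho3} at $t=0$ with $a=b=0$, $u=1$, $v=-3$, $f=1$ gives:
\[
\rho_0(0)=-2\cdot1-(-3)=1,\qquad \rho_1(0)=-1+1=0,\qquad \rho_3(0)=3.
\]
Since $\rho_1$ has multiplicity two, the eigenvalues at $t=0$ are $1,0,0,3$, so $\rank\Ric=2$ there.

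\textbf{Rank for $t\neq0$.} Using \eqref{eq:codazzi-rho1}, $\rho_1'(0)=a(0)\bigl(\rho_0(0)-\rho_1(0)\bigr)=0$. Differentiating once more gives $\rho_1''(0)=u(0)\bigl(\rho_0(0)-\rho_1(0)\bigr)=1$. Taylor's theorem then yields $\rho_1(t)=t^2/2+o(t^2)$, so after shrinking $\varepsilon$ we have $\rho_1>0$ for $0<|t|<\varepsilon$. By continuity, $\rho_0$ and $\rho_3$ stay positive near $0$.

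\textbf{Conclusion.} Combining these facts, $\Ric\geq0$ on the whole interval, with rank $4$ for $t\neq0$ and rank $2$ at $t=0$. This is precisely the statement of the proposition.
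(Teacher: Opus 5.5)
Your proposal is correct and follows the paper's argument step by step. You solve the ODE system with the given initial data and invoke the Codazzi lemma to get $\delta\Rm=0$. You read off the eigenvalues $1,0,0,3$ at $t=0$, then use $\rho_1'(0)=0$, $\rho_1''(0)=1$ and continuity of $\rho_0,\rho_3$. The substitution check you flag does go through: using $(f^{-2})'=-2af^{-2}$, one recovers exactly the $u'$ and $v'$ equations, so there is no sign issue.
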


\begin{remark}\label{rem:ric2-fails}
The example also shows explicitly how ordinary nonnegative Ricci curvature permits a cancellation which is ruled out by intermediate Ricci curvature. At $t=0$, formulas~\eqref{eq:K0A}--\eqref{eq:KA3} give
\[
K_{01}=K_{02}=-1,
\qquad
K_{12}=1,
\qquad
K_{03}=3,
\qquad
K_{13}=K_{23}=0.
\]
Hence
\[
\Ric(e_1,e_1)
=
K_{10}+K_{12}+K_{13}
=
-1+1+0
=
0,
\]
while $K_{10}+K_{13}=-1<0$. Thus $\Ric\geq0$ but $\Ric_2\not\geq0$. This is precisely the cancellation mechanism excluded by Lemma~\ref{lem:intermediate-algebra}.
\end{remark}

We now prove Theorem~\ref{thm:counterexample}.

\begin{proof}[Proof of Theorem~\ref{thm:counterexample}]
For $n=4$, the conclusion is Proposition~\ref{prop:4d-counterexample}.

Let $n>4$, and let $(M^4,g)$ be the four-dimensional example above. Consider
\[
(\widehat M^n,\widehat g)
=
(M^4,g)\times(\mathbb R^{n-4},g_{\mathrm{Euc}}).
\]
The Ricci tensor of the product is $\Ric_{\widehat g}=\Ric_g\oplus0$. Since $\Ric_g$ is Codazzi and the Euclidean factor is flat, $\Ric_{\widehat g}$ is again Codazzi. Hence $\delta\Rm_{\widehat g}=0$, and furthermore $\Ric_{\widehat g}\geq0$.

At points with $t=0$, $\rank\Ric_{\widehat g}=2$, whereas for $0<|t|<\varepsilon$, $\rank\Ric_{\widehat g}=4$. Thus the Ricci tensor does not have constant rank. This proves the theorem in every dimension $n\geq4$.
\end{proof}

The same examples also demonstrate directly the sharpness of the index range in Theorem~\ref{thm:mainA}. Indeed, at a point with $t=0$, fix the direction $e_1$ tangent to $\mathbb S^2$. In the product example, the sectional curvatures of the planes spanned by $e_1$ and the additional Euclidean directions all vanish. Choosing the $n-2$ directions consisting of $e_0$, $e_3$, and the $n-4$ Euclidean directions, we obtain
\[
K(e_1,e_0)+K(e_1,e_3)
+
\sum_{\alpha=1}^{n-4}
K(e_1,\partial_{x_\alpha})
=
-1.
\]
Hence $\Ric_{n-2}\not\geq0$ for this example, even though $\Ric_{n-1}=\Ric\geq0$.  So going from \(k=n-2\) to \(k=n-1\) is the real borderline case.

\section{Constant rank theorem for gradient Ricci solitons}
\label{sec:ricci-soliton}

We now turn to a second geometric setting in which the constant rank
principle under intermediate Ricci curvature naturally appears. Recall that
a gradient Ricci soliton is a triple $(M^n,g,f)$ satisfying
\begin{equation}\label{eq:soliton-equation}
\Ric+\nabla^2 f=\lambda g
\end{equation}
for some constant $\lambda\in\mathbb R$. According as $\lambda>0$,
$\lambda=0$, or $\lambda<0$, the soliton is called shrinking, steady, or
expanding, respectively.

Under nonnegative sectional curvature, Petersen--Wylie \cite{PetersenWylieClassification} and Guan--Lu--Xu
\cite{GuanLuXu} proved that the Ricci tensor of a gradient Ricci soliton
has constant rank and in the complete case, its universal cover
splits isometrically along the Ricci-null directions. In this section we
show that the sectional curvature assumption can again be weakened to
nonnegative intermediate Ricci curvature $\Ric_k\geq 0, (1\leq k\leq n-2)$.

We first recall the standard elliptic equation satisfied by the Ricci
tensor of a gradient Ricci soliton. Let
$
\Delta_f=\Delta-\nabla_{\nabla f}
$
denote the weighted Laplacian. Then
\begin{equation}\label{eq:weighted-ricci}
(\Delta_f\Ric)_{ij}
=
2\lambda R_{ij}
-
2R_{ikjl}R_{kl}.
\end{equation}
See, for instance, \cite{PetersenWylieClassification}.

By Lemma~\ref{lem:intermediate-algebra}, the assumption
$\Ric_k\geq0$, $1\leq k\leq n-2$, implies
$
\Ric\geq0
$
and, for every orthonormal pair $X,Y$,
\begin{equation}\label{eq:soliton-sectional-estimate}
K(X,Y)
\geq
-c_{n,k}\Ric(X,X),
\qquad
c_{n,k}:=\frac{k-1}{n-1-k}.
\end{equation}

Let
\[
0\leq\mu_1\leq\cdots\leq\mu_n
\]
be the eigenvalues of the Ricci tensor at a fixed point and choose a
corresponding orthonormal eigenbasis $\{e_i\}$. From
\eqref{eq:weighted-ricci},
\[
(\Delta_f\Ric)_{ii}
=
2\lambda\mu_i
-
2\sum_jK_{ij}\mu_j.
\]
Using \eqref{eq:soliton-sectional-estimate} and the nonnegativity of the
Ricci eigenvalues, we obtain
\[
-\sum_jK_{ij}\mu_j
\leq
-\sum_{K_{ij}<0}K_{ij}\mu_j
\leq
c_{n,k}\mu_i
\sum_{K_{ij}<0}\mu_j
\leq
c_{n,k}R\mu_i.
\]
Consequently,
\begin{equation}\label{eq:weighted-component-estimate}
(\Delta_f\Ric)_{ii}
\leq
\bigl(2\lambda+2c_{n,k}R\bigr)\mu_i.
\end{equation}

For $1\leq\ell\leq n$, set
$
\sigma_\ell=\mu_1+\cdots+\mu_\ell.
$
We first derive the weighted analogue of
Proposition~\ref{prop:subtrace-inequality}.

\begin{proposition}\label{prop:soliton-subtrace}
The lower Ricci subtrace $\sigma_\ell$ satisfies
\begin{equation}\label{eq:soliton-subtrace}
\Delta_f\sigma_\ell
\leq
\bigl(2\lambda+2c_{n,k}R\bigr)\sigma_\ell
\end{equation}
in the viscosity sense.
\end{proposition}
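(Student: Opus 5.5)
The argument follows the proof of Proposition~\ref{prop:subtrace-inequality}, with the rough Laplacian replaced by the weighted Laplacian $\Delta_f$. Fix $p\in M$ and let $\xi$ be a $C^2$ lower support function for $\sigma_\ell$ at $p$. We must show
\[
\Delta_f\xi(p)\leq\bigl(2\lambda+2c_{n,k}R(p)\bigr)\sigma_\ell(p).
\]
Let $e_1,\ldots,e_n$ be an orthonormal Ricci eigenbasis at $p$ with eigenvalues $\mu_1\leq\cdots\leq\mu_n$. Applying Lemma~\ref{lem:subtrace-viscosity} with $A=\Ric$ controls both the gradient and the Laplacian of $\xi$ at $p$.

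Since $\Delta_f\xi=\Delta\xi-\nabla_{\nabla f}\xi$, the drift term is handled by the first-order identity~\eqref{eq:subtrace-gradient} with $X=\nabla f(p)$:
\[
\nabla_{\nabla f}\xi(p)=\sum_{i=1}^{\ell}(\nabla_{\nabla f}\Ric)(e_i,e_i).
\]
The second-order inequality gives
\[
\Delta\xi(p)\leq\sum_{i=1}^{\ell}(\Delta\Ric)(e_i,e_i).
\]
Subtracting the first from the second yields
\[
\Delta_f\xi(p)\leq\sum_{i=1}^{\ell}(\Delta_f\Ric)(e_i,e_i).
\]
This is the one point needing care. It works because the gradient relation is an \emph{equality}: the support function touches the smooth comparison function $\sum_{i\le\ell}\Ric(E_i,E_i)$ from below at $p$, so their first derivatives agree in every direction, including $\nabla f(p)$. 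The sign of the drift term therefore causes no trouble.

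Finally, apply~\eqref{eq:weighted-component-estimate} to each $i\leq\ell$ and sum:
\[
\sum_{i=1}^{\ell}(\Delta_f\Ric)(e_i,e_i)\leq\bigl(2\lambda+2c_{n,k}R\bigr)\sum_{i=1}^{\ell}\mu_i=\bigl(2\lambda+2c_{n,k}R\bigr)\sigma_\ell(p).
\]
Here $R$ is the scalar curvature at $p$. Unlike in the harmonic-curvature case, $R$ need not be constant, but it is smooth and hence locally bounded, which is all the later strong maximum principle will need. This is exactly~\eqref{eq:soliton-subtrace} in the viscosity sense.

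The estimate~\eqref{eq:weighted-component-estimate} itself needs no sign condition on $\lambda$, since the term $2\lambda\mu_i$ is carried linearly. Its derivation used only $\mu_j\geq0$ and $\mu_j\leq R$, both of which follow from Lemma~\ref{lem:intermediate-algebra}. The only genuinely new step compared with Proposition~\ref{prop:subtrace-inequality} is the treatment of the drift term above.
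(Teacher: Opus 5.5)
Your proposal is correct and matches the paper's proof: you apply Lemma~\ref{lem:subtrace-viscosity} to $A=\Ric$, use the gradient equality \eqref{eq:subtrace-gradient} with $X=\nabla f(p)$ to pass from $\Delta$ to $\Delta_f$, and then sum \eqref{eq:weighted-component-estimate} over $i\le\ell$. One small imprecision: besides $\mu_j\ge0$, the derivation of \eqref{eq:weighted-component-estimate} also relies on the sectional curvature bound \eqref{eq:soliton-sectional-estimate}, and the bound it really uses is $\sum_{K_{ij}<0}\mu_j\le R$ rather than $\mu_j\le R$, but all of this again comes from Lemma~\ref{lem:intermediate-algebra}.
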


\begin{proof}
Fix $p\in M$, and let $\xi$ be a smooth lower support for
$\sigma_\ell$ at $p$. Choose an orthonormal Ricci eigenbasis
$e_1,\ldots,e_n$ at $p$ corresponding to the eigenvalues
$\mu_1\leq\cdots\leq\mu_n$. By Lemma~\ref{lem:subtrace-viscosity},
\[
\Delta\xi(p)
\leq
\sum_{i=1}^{\ell}(\Delta\Ric)(e_i,e_i),
\]
while \eqref{eq:subtrace-gradient} gives
\[
\nabla_X\xi(p)
=
\sum_{i=1}^{\ell}
(\nabla_X\Ric)(e_i,e_i)
\]
for every $X\in T_pM$. Taking $X=\nabla f$, we therefore obtain
\[
\Delta_f\xi(p)
\leq
\sum_{i=1}^{\ell}
(\Delta_f\Ric)(e_i,e_i).
\]
Applying \eqref{eq:weighted-component-estimate} yields
\[
\begin{aligned}
\Delta_f\xi(p)
&\leq
\bigl(2\lambda+2c_{n,k}R(p)\bigr)
\sum_{i=1}^{\ell}\mu_i\\
&=
\bigl(2\lambda+2c_{n,k}R(p)\bigr)
\sigma_\ell(p),
\end{aligned}
\]
which proves \eqref{eq:soliton-subtrace}.
\end{proof}

\begin{proof}[Proof of the constant rank assertion in
Theorem~\ref{thm:mainB}]
Let
$
m=\max_{x\in M}\dim\ker\Ric_x.
$
If $m=0$, then $\Ric>0$ everywhere and there is nothing to prove.
Suppose $m\geq1$ and choose $p\in M$ such that
$\dim\ker\Ric_p=m$. Then
$
\sigma_m(p)=0,
$ and $
\sigma_m\geq0.
$

Let $\Omega$ be a connected relatively compact neighborhood
of $p$. Since the scalar curvature is smooth, there exists a constant
$C_\Omega\geq0$ such that
\[
2\lambda+2c_{n,k}R\leq C_\Omega
\]
on $\Omega$. Proposition~\ref{prop:soliton-subtrace} gives
\[
\Delta_f\sigma_m-C_\Omega\sigma_m\leq0
\]
in the viscosity sense on $\Omega$. Since $\sigma_m\geq0$ and
$\sigma_m(p)=0$, the strong maximum principle implies
$\sigma_m\equiv0$ on $\Omega$.

It follows that the zero set of $\sigma_m$ is both open and closed.
Since $M$ is connected and nonempty, we conclude that
\[
\sigma_m\equiv0
\]
on $M$. Hence $\dim\ker\Ric_x\geq m$ for every $x\in M$. By the
definition of $m$, equality holds everywhere, and therefore the Ricci
tensor has constant rank.
\end{proof}

We next prove that the Ricci-null distribution is parallel. This part
uses only the algebraic consequence of intermediate Ricci curvature and
the weighted Ricci equation.

Since the Ricci tensor has constant rank,
$
\mathcal N=\ker\Ric
$
is a smooth subbundle. If $V\in\mathcal N$, then
Corollary~\ref{cor:null-jacobi} gives
\begin{equation}\label{eq:soliton-null-jacobi}
R(V,\cdot,V,\cdot)\equiv0.
\end{equation}

\begin{proposition}\label{prop:soliton-nullity-parallel}
Under the assumptions of Theorem~\ref{thm:mainB}, the
distribution $\mathcal N=\ker\Ric$ is parallel.
\end{proposition}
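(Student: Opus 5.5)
I will mirror the argument of Proposition~\ref{prop:nullity-parallel}, replacing the rough Laplacian by the weighted Laplacian $\Delta_f$ and the Codazzi identity \eqref{eq:ricci-laplacian} by the weighted equation \eqref{eq:weighted-ricci}. Let $V$ be a local smooth section of $\mathcal N$, so that $\Ric(V,\cdot)\equiv0$, and in particular $\Ric(V,V)\equiv0$. Applying $\Delta_f$ to this identity, and using a local orthonormal frame normal at the point under consideration, the product rule gives
\[
0=(\Delta_f\Ric)(V,V)+4\sum_a(\nabla_{e_a}\Ric)(\nabla_{e_a}V,V)+2\Ric(\Delta_f V,V)+2\sum_a\Ric(\nabla_{e_a}V,\nabla_{e_a}V).
\]
The first-order drift term $-\nabla_{\nabla f}$ satisfies the Leibniz rule. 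It therefore distributes exactly as the $\Delta$ terms do, and no extra cross terms arise.

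Next I simplify this identity. The term $\Ric(\Delta_fV,V)$ vanishes because $V\in\ker\Ric$. Differentiating the identity $\Ric(\nabla_{e_a}V,V)=0$ in the direction $e_a$ and summing over $a$ gives
\[
\sum_a(\nabla_{e_a}\Ric)(\nabla_{e_a}V,V)=-\sum_a\Ric(\nabla_{e_a}V,\nabla_{e_a}V)-\sum_a\Ric(\nabla_{e_a}\nabla_{e_a}V,V).
\]
The last sum is zero, again because $V$ is null. Substituting back yields
\[
0=(\Delta_f\Ric)(V,V)-2\sum_a\Ric(\nabla_{e_a}V,\nabla_{e_a}V).
\]

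By \eqref{eq:weighted-ricci},
\[
(\Delta_f\Ric)(V,V)=2\lambda\Ric(V,V)-2R(V,e_i,V,e_j)R_{ij}.
\]
The first term vanishes since $V$ is null. The second vanishes by \eqref{eq:soliton-null-jacobi}, which is Corollary~\ref{cor:null-jacobi} applied under $\Ric_k\geq0$. Hence
\[
\sum_a\Ric(\nabla_{e_a}V,\nabla_{e_a}V)=0.
\]
Since $\Ric\geq0$ by Lemma~\ref{lem:intermediate-algebra}, each term of this sum is nonnegative and therefore zero. For a positive semidefinite form, $\Ric(W,W)=0$ forces $W\in\ker\Ric$, so $\nabla_XV\in\mathcal N$ for every $X$, and $\mathcal N$ is parallel.

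The main point needing care is the bookkeeping of the drift term in the expansion of $\Delta_f(\Ric(V,V))$: I must check that $\nabla_{\nabla f}$ applied to $\Ric(V,V)$ splits as $(\nabla_{\nabla f}\Ric)(V,V)+2\Ric(\nabla_{\nabla f}V,V)$, and that the second piece is absorbed into $\Ric(\Delta_fV,V)=0$. Once that is verified, everything else is identical to the harmonic-curvature case. In particular, the constant $\lambda$ plays no role, because it multiplies $\Ric(V,V)=0$.
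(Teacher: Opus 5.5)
Your proposal is correct and follows the paper's proof essentially line by line. You expand $\Delta_f(\Ric(V,V))=0$, use $\Ric(\cdot,V)=0$ to convert the cross term into $-\sum_a\Ric(\nabla_{e_a}V,\nabla_{e_a}V)$, kill $(\Delta_f\Ric)(V,V)$ via \eqref{eq:weighted-ricci} and \eqref{eq:soliton-null-jacobi}, and conclude from $\Ric\geq0$. The paper leaves two steps implicit that you make explicit: the vanishing term $\Ric(\nabla_{e_a}\nabla_{e_a}V,V)$ and the Leibniz rule for the drift.
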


\begin{proof}
Let $V$ be a local smooth section of $\mathcal N$. Then
$\Ric(V,\cdot)=0$, and in particular
$
\Ric(V,V)\equiv0.
$ Applying $\Delta_f$ to this identity gives, at a point where
$\{e_a\}$ is a normal orthonormal frame,
\begin{align}
0
&=
\Delta_f\bigl(\Ric(V,V)\bigr)\nonumber\\
&=
(\Delta_f\Ric)(V,V)
+
4\sum_a
(\nabla_{e_a}\Ric)(\nabla_{e_a}V,V)\nonumber\\
&\qquad
+
2\Ric(\Delta_fV,V)
+
2\sum_a
\Ric(\nabla_{e_a}V,\nabla_{e_a}V).
\label{eq:soliton-nullity-bochner}
\end{align}
Since $V\in\ker\Ric$, we have
$\Ric(\Delta_fV,V)=0$. Moreover,
$\Ric(\nabla_{e_a}V,V)=0$. Differentiating the latter identity and
summing over $a$ yields
\[
\sum_a
(\nabla_{e_a}\Ric)(\nabla_{e_a}V,V)
=
-
\sum_a
\Ric(\nabla_{e_a}V,\nabla_{e_a}V).
\]
Thus \eqref{eq:soliton-nullity-bochner} becomes
\begin{equation}\label{eq:soliton-nullity-bochner2}
0
=
(\Delta_f\Ric)(V,V)
-
2\sum_a
\Ric(\nabla_{e_a}V,\nabla_{e_a}V).
\end{equation}

On the other hand, by \eqref{eq:weighted-ricci},
\[
(\Delta_f\Ric)(V,V)
=
2\lambda\Ric(V,V)
-
2R(V,e_i,V,e_j)R_{ij}.
\]
Both terms on the right-hand side vanish: the first because
$V\in\ker\Ric$, and the second by
\eqref{eq:soliton-null-jacobi}. Therefore
\[
(\Delta_f\Ric)(V,V)=0.
\]
It follows from \eqref{eq:soliton-nullity-bochner2} that
\[
\sum_a
\Ric(\nabla_{e_a}V,\nabla_{e_a}V)=0.
\]
Since $\Ric\geq0$, every summand is nonnegative. Hence
\[
\Ric(\nabla_XV,\nabla_XV)=0
\]
for every vector field $X$, and therefore
$\nabla_XV\in\ker\Ric$. Thus $\mathcal N$ is parallel.
\end{proof}

We can now complete the proof of Theorem~\ref{thm:mainB}.

\begin{proof}[Proof of the splitting statement in
Theorem~\ref{thm:mainB}]
Assume that $(M,g)$ is complete. Since $\mathcal N$ is parallel, its
lift to the universal cover is parallel, as is its orthogonal
complement. By the de Rham decomposition theorem,
\[
(\widetilde M,\widetilde g)
=
(M_0^m,g_0)\times(N^{n-m},g_N),
\]
where $m=\dim\ker\Ric$.

For $V\in TM_0$, \eqref{eq:soliton-null-jacobi} implies that every
sectional curvature containing $V$ vanishes. In particular, $M_0$ is
flat. Since it is complete and simply connected,
$
(M_0,g_0)\cong\mathbb R^m.
$
Moreover, the kernel of the Ricci tensor is precisely the tangent bundle
of the Euclidean factor, and hence
$
\Ric_{g_N}>0.
$

It remains to describe the potential. Let $\widetilde f$ be the lift of
$f$ to $\widetilde M=\mathbb R^m\times N$. The mixed components of the
soliton equation give
\[
\nabla^2\widetilde f(X,Y)=0, \quad  X\in T\mathbb R^m,\quad Y\in TN.
\]
Hence
\[
\widetilde f(x,y)=f_0(x)+f_N(y)+C.
\]
Restricting \eqref{eq:soliton-equation} to the Euclidean factor gives
$
\nabla_{\mathbb R^m}^2f_0=\lambda g_{\mathrm{Euc}},
$
and therefore
$
f_0(x)=\frac{\lambda}{2}|x|^2+\langle a,x\rangle.
$
Restricting the soliton equation to $N$ gives
\[
\Ric_{g_N}+\nabla_N^2f_N=\lambda g_N.
\]
This proves \eqref{eq:potential-splitting} and completes the proof.
\end{proof}

\begin{remark}
When $\lambda\neq0$, a translation in the Euclidean factor removes the
linear term $\langle a,x\rangle$ in
\eqref{eq:potential-splitting}. Thus, up to translation, the Euclidean
factor carries the standard Gaussian shrinking or expanding soliton
potential $\lambda|x|^2/2$. In the steady case $\lambda=0$, the
Euclidean component of the potential is affine.
\end{remark}

Lastly, we show that the range $1\leq k\leq n-2$ in
Theorem~\ref{thm:mainB} is sharp, i.e., the ordinary nonnegative
Ricci curvature does not imply the constant rank property.

We first construct a four-dimensional example. Let
$
M_\varepsilon
=
(-\varepsilon,\varepsilon)\times\mathbb S^2\times\mathbb S^1
$
and consider
\[
g
=
dt^2+F(t)^2g_{\mathbb S^2}
+H(t)^2d\theta^2.
\]
where $g_{\mathbb S^2}$ denotes the round metric of Gaussian
curvature $1$. We look for a soliton potential $\phi=\phi(t)$ and set
\[
a=\frac{F'}F,
\qquad
b=\frac{H'}H,
\qquad
p=\phi'.
\]

Let $e_0= \frac{\partial}{\partial t}$, let $e_1,e_2$ be tangent to the sphere factor,
and let $e_3$ be tangent to the circle factor. Since the curvature computation is identical to the one in Section~2, we only record the resulting formulas.
The nonzero sectional
curvatures are
\[
K_{0A}=-(a'+a^2),\qquad
K_{12}=\frac1{F^2}-a^2,\qquad
K_{03}=-(b'+b^2),\qquad
K_{A3}=-ab,
\]
where $A=1,2$. Hence the three distinct Ricci eigenvalues, with
$\rho_1$ of multiplicity two, are
\begin{align}
\rho_0&=-2(a'+a^2)-(b'+b^2),\label{eq:shrinker-rho0}\\
\rho_1&=\frac1{F^2}-a'-2a^2-ab,\notag\\
\rho_3&=-b'-b^2-2ab.\label{eq:shrinker-rho3}
\end{align}

Since $\phi$ depends only on $t$,
\[
\nabla^2\phi(e_0,e_0)=p',
\qquad
\nabla^2\phi(e_A,e_A)=ap,
\qquad
\nabla^2\phi(e_3,e_3)=bp.
\]
Therefore the normalized shrinking soliton equation
$
\Ric+\nabla^2 \phi=\frac{1}{2} g
$
is equivalent to
\begin{equation}\label{eq:shrinker-eigen-equations}
\rho_0+p'= \frac{1}{2},\qquad
\rho_1+ap=\frac{1}{2},\qquad
\rho_3+bp=\frac{1}{2}.
\end{equation}

Substituting \eqref{eq:shrinker-rho0}--\eqref{eq:shrinker-rho3} into
\eqref{eq:shrinker-eigen-equations}, we obtain the regular first-order
system
\begin{equation}\label{eq:shrinker-ODE}
\left\{
\begin{aligned}
F'&=aF,\\
H'&=bH,\\
a'&=\frac1{F^2}-2a^2-ab+ap-\frac{1}{2},\\
b'&=-b^2-2ab+bp-\frac{1}{2},\\
p'&=\frac2{F^2}-2a^2-4ab+2ap+bp-1.
\end{aligned}
\right.
\end{equation}
We prescribe the initial data
\[
F(0)=\frac{1}{\sqrt{2}},\qquad
H(0)=1,\qquad
a(0)=-\frac{3}{\sqrt{2}},\qquad
b(0)=\frac{1}{\sqrt{2}},\qquad
p(0)=\frac{1}{\sqrt{2}}.
\]
Since the right-hand side of \eqref{eq:shrinker-ODE} is smooth for
$F,H>0$, standard ODE theory provides a unique smooth local solution
on $(-\varepsilon,\varepsilon)$, after decreasing $\varepsilon>0$ if
necessary.

At $t=0$, equations \eqref{eq:shrinker-ODE} give
\[
a'(0)=-\frac{15}{2},\qquad
b'(0)=\frac{5}{2},\qquad
p'(0)=-\frac{5}{2}.
\]
Using \eqref{eq:shrinker-eigen-equations}, we therefore obtain
\[
\rho_0(0)=3,\qquad
\rho_1(0)=2,\qquad
\rho_3(0)=0.
\]
Thus the Ricci eigenvalues at $t=0$ are
\[
3,\quad2,\quad2,\quad0,
\]
and in particular
$
\rank\Ric|_{t=0}=3.
$

It remains to verify that the zero eigenvalue becomes positive on both
sides of $t=0$. From \eqref{eq:shrinker-eigen-equations},
$
\rho_3=\frac{1}{2}-bp,
$
and hence
$
\rho_3'=-(b'p+bp').
$
The initial data give $\rho_3'(0)=0$. Differentiating the last two
equations in \eqref{eq:shrinker-ODE} and evaluating at $t=0$ yields
\[
b''(0)=\frac{25\sqrt{2}}{2},
\qquad
p''(0)=-3\sqrt{2}.
\]
Consequently,
\[
\begin{aligned}
\rho_3''(0)
&=-(bp)''(0)\\
&=-\bigl(b''p+2b'p'+bp''\bigr)\big|_{t=0}\\
&=-\bigl( \frac{25}{2}-\frac{25}{2}-3\bigr)=
3.
\end{aligned}
\]
Therefore
\[
\rho_3(t)=\frac{3}{2}t^2+o(t^2).
\]
and hence $\rho_3(t)>0$ for $0<|t|\ll1$. Since
$\rho_0(0)=3$ and $\rho_1(0)=2$, continuity shows, after possibly
decreasing $\varepsilon$, that
\[
\rho_0>0,\qquad
\rho_1>0,\qquad
\rho_3\geq0
\]
throughout $M_\varepsilon$. We have proved:

\begin{proposition}\label{prop:shrinker-4d-counterexample}
There exists a four-dimensional gradient shrinking Ricci soliton
$(M_\varepsilon,g,\phi)$ satisfying
\[
\Ric+\nabla^2\phi=\frac{1}{2}g,
\qquad
\Ric\geq0,
\]
such that
\[
\rank\Ric=
\begin{cases}
3,&t=0,\\
4,&0<|t|<\varepsilon.
\end{cases}
\]
\end{proposition}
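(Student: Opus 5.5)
The plan is to verify that the ODE solution constructed just above really yields a gradient shrinking soliton with the stated rank behaviour. Solving \eqref{eq:shrinker-ODE} with the given initial data produces smooth $F,H>0$ and smooth $a,b,p$ on $(-\varepsilon,\varepsilon)$. We then set $\phi(t)=\int_0^t p(s)\,ds$, so that $\phi'=p$.

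\textbf{Consistency of the ODE system.} The first step is to check that the solution satisfies the definitions $a=F'/F$ and $b=H'/H$; this is immediate from the first two equations. Next we confirm that \eqref{eq:shrinker-eigen-equations} holds identically, not merely at $t=0$. Substituting the formulas for $\rho_0,\rho_1,\rho_3$ from \eqref{eq:shrinker-rho0}--\eqref{eq:shrinker-rho3}:
\begin{itemize}
\item the $a'$ equation rearranges exactly to $\rho_1+ap=\tfrac12$;
\item the $b'$ equation rearranges to $\rho_3+bp=\tfrac12$;
\item the $p'$ equation, once $a'$ and $b'$ are eliminated using the previous two, rearranges to $\rho_0+p'=\tfrac12$.
\end{itemize}
This last elimination is the one piece of algebra I would actually redo carefully. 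Because the Hessian of $\phi(t)$ and the Ricci tensor are both diagonal in the frame $e_0,e_A,e_3$, these three scalar equations are equivalent to $\Ric+\nabla^2\phi=\tfrac12 g$.

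\textbf{Rank at $t=0$.} We evaluate at $t=0$. The initial data give $a(0)p(0)=-\tfrac32$ and $b(0)p(0)=\tfrac12$, so $\rho_1(0)=2$ and $\rho_3(0)=0$. From $p'(0)=-\tfrac52$ we get $\rho_0(0)=3$. Hence $\rank\Ric=3$ on the slice $t=0$.

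\textbf{Rank for $t\neq0$.} This is the main obstacle: showing that $\rho_3=\tfrac12-bp$ has a nondegenerate minimum at $0$.
\begin{itemize}
\item $\rho_3'(0)=-(b'p+bp')(0)=-\tfrac1{\sqrt2}\bigl(\tfrac52-\tfrac52\bigr)=0$.
\item For the second derivative, differentiate the $b'$ and $p'$ equations once, using $F'(0)=aF=-\tfrac32$ so that $(F^{-2})'(0)=-2F^{-3}F'=-2a F^{-2}=6\sqrt2$. This gives $b''(0)$ and $p''(0)$, and we check that
\[
\rho_3''(0)=-(b''p+2b'p'+bp'')(0)=3>0.
\]
\end{itemize}
By Taylor's theorem, $\rho_3(t)=\tfrac32t^2+o(t^2)$, which is positive for small $t\neq0$.

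\textbf{Conclusion.} Since $\rho_0(0)=3>0$ and $\rho_1(0)=2>0$, continuity lets us shrink $\varepsilon$ so that $\rho_0,\rho_1>0$ and $\rho_3>0$ off $t=0$. Therefore $\Ric\geq0$ throughout, $\rank\Ric=4$ for $0<|t|<\varepsilon$, and $\rank\Ric=3$ at $t=0$, as claimed in Proposition~\ref{prop:shrinker-4d-counterexample}.
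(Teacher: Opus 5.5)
Your proposal is correct and follows the paper's argument essentially step for step. You solve the ODE system, read off the Ricci eigenvalues $3,2,2,0$ at $t=0$ from the three scalar soliton equations, and show that $\rho_3=\tfrac12-bp$ vanishes to exactly second order, with $b''(0)=\tfrac{25\sqrt2}{2}$ and $p''(0)=-3\sqrt2$ giving $\rho_3''(0)=3$. Your explicit reverse check that the solution satisfies all three eigen-equations, and your choice $\phi=\int_0^t p$, only make precise what the paper leaves implicit in deriving the system.
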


The example also makes clear why the hypothesis $\Ric_2\geq0$ rules
out this phenomenon. Indeed, at $t=0$ we have
\[
K_{01}=K_{02}=3,\qquad
K_{12}=-\frac{5}{2},\qquad
K_{03}=-3,\qquad
K_{13}=K_{23}=\frac{3}{2}.
\]
Thus
\[
\Ric(e_3,e_3)=K_{30}+K_{31}+K_{32}=-3+\frac{3}{2}+\frac{3}{2}=0,
\]
whereas
\[
K(e_3,e_0)+K(e_3,e_1)=-3+\frac{3}{2}=-\frac{3}{2}<0.
\]
Hence the example satisfies $\Ric\geq0$ but not $\Ric_2\geq0$.

\begin{proof}[Proof of Theorem~\ref{thm:soliton-sharpness}]
The case $n=4$ follows from
Proposition~\ref{prop:shrinker-4d-counterexample}. For $n>4$, take the
product of this soliton with the Gaussian shrinking soliton
\[
\left(
\mathbb R^{n-4},
g_{\mathrm{Euc}},
\frac{|z|^2}{4}
\right).
\]
Thus
\[
\widehat M
=
M_\varepsilon\times\mathbb R^{n-4},
\qquad
\widehat g
=
g+g_{\mathrm{Euc}},
\qquad
\widehat\phi
=
\phi+\frac{|z|^2}{4}.
\]
Then
\[
\Ric_{\widehat g}+\nabla^2\widehat\phi
=
 \frac{1}{2} \widehat g,
\]
and
\[
\Ric_{\widehat g}
=
\Ric_g\oplus0
\geq0.
\]
Its rank equals $3$ along $t=0$ and $4$ for
$0<|t|<\varepsilon$, so it is not constant.

Finally, the same examples show directly that the endpoint
$\Ric_{n-1}=\Ric\geq0$ cannot replace the assumption
$\Ric_k\geq0$, $k\leq n-2$. At $t=0$, fix the direction $e_3$.
The sectional curvatures involving $e_3$ are
\[
-3,\quad\frac{3}{2},\quad\frac{3}{2},\quad
\underbrace{0,\ldots,0}_{n-4}.
\]
Among the $n-1$ orthogonal directions, choose the $n-2$ directions
obtained by omitting one of the two directions with curvature $\frac{3}{2}$.
Their sum is $-\frac{3}{2}$. Hence
\[
\Ric_{n-2}\not\geq0.
\]
This proves both the theorem and the sharpness of the range
$1\leq k\leq n-2$.
\end{proof}

\begin{remark}
The counterexample above is local, since the constant-rank conclusion
in Theorem~\ref{thm:mainB} is itself local and does not require
completeness. It is a separate question whether there exists a complete
gradient Ricci soliton with $\Ric\geq0$ and nonconstant Ricci rank. For recent related work, see \cite{Wu-Wu2}.
\end{remark}

{\bf Acknowledgment.} Both authors would like to thank Professor Xi-Nan Ma for his helpful discussion.

{\bf Declaration of AI use}
 The authors used ChatGPT (OpenAI) during the preparation of the manuscript for language editing and exploratory calculations. All mathematical arguments and computations were independently checked by the authors.

\end{document}